\newcommand{\bigO}{\ensuremath{\mathop{}\mathopen{}\mathcal{O}\mathopen{}}}
\newcommand{\smallO}{ \scalebox{0.7}{$\mathcal{O}$}}
\newcommand{\EE}{{\mathbb{E}}}
\newcommand{\rank}{{\mathrm{rank}}}
\newtheorem{lemma}{Lemma}
\newtheorem{theorem}{Theorem}
\newtheorem{corollary}{Corollary}
\newtheorem{assumption}{Assumption}
\begin{document}

%

%

\twocolumn[

\aistatstitle{Noisy Low-Rank Matrix Completion via Transformed $L_1$ Regularization and its Theoretical Properties}

\aistatsauthor{ Kun Zhao\textsuperscript{1} \And Jiayi Wang\textsuperscript{1} \And  Yifei Lou\textsuperscript{2} }

\aistatsaddress{ \textsuperscript{1}The University of Texas at Dallas \And \textsuperscript{2}The University of North Carolina at Chapel Hill } ]


\begin{abstract}
    This paper focuses on recovering an underlying matrix from its noisy partial entries, a problem commonly known as matrix completion. 
    We delve into the investigation of a non-convex regularization, referred to as transformed $L_1$ (TL1), which interpolates between the rank and the nuclear norm of matrices through a hyper-parameter $a \in (0, \infty)$.
    While some literature adopts such regularization for matrix completion, it primarily addresses scenarios with uniformly missing entries and focuses on algorithmic advances. To fill in the gap in the current literature, we provide a comprehensive statistical analysis for the estimator from a TL1-regularized recovery model under general sampling distribution. 
    In particular, we show that when $a$ is sufficiently large, the matrix recovered by the TL1-based model enjoys a convergence rate measured by the Frobenius norm, comparable to that of the model based on the nuclear norm, despite the challenges posed by the non-convexity of the TL1 regularization.
    When $a$ is small enough, we show that the rank of the estimated matrix remains a constant order when the true matrix is exactly low-rank.
    A trade-off between controlling the error and the rank is established through different choices of tuning parameters.    
    The appealing practical performance of TL1 regularization is demonstrated through a simulation study that encompasses various sampling mechanisms, as well as two real-world applications. Additionally, the role of the hyper-parameter $a$ on the TL1-based model is explored via experiments to offer guidance in practical scenarios.
\end{abstract}

\section{INTRODUCTION}
In the era of big data, low-rank matrix completion has become an indispensable and prevalent resource in various fields, such as machine learning and statistics. It addresses the challenge of estimating the missing entries of a partially contaminated observed matrix, where the low-rank property plays a pivotal role in avoiding ill-posedness. This technique is widely used for tasks like recommendation systems \citep{kang2016top,gurini2018temporal,chen2021kernel,zhang2021artificial}, signal processing \citep{weng2012low,zhang2020low,chen2023high,yuchi2023bayesian}, image recovery \citep{changjun2012single,cao2014image,zheng2024scale}, computer vision \citep{ji2010robust,jia2022non}, seismic imaging \citep{kumlu2021ground,popa2021improved,popa2022tensor}, and data imputation \citep{chen2017ensemble,chen2020nonconvex,xu2023hrst}.

To recover a low-rank matrix, matrix factorization, which reformulates the problem into a non-convex optimization task, has been extensively studied. For instance, alternating minimization methods for low-rank matrix decomposition have been explored by \cite{jain2013low} and \cite{gu2023low}. To address challenges with ill-conditioned low-rank matrices, \cite{tong2021accelerating} proposed a scaled gradient descent approach, offering improved estimation efficiency. Additionally, \cite{ahn2021riemannian} examined matrix factorization techniques through the lens of Riemannian geometry, while \cite{chen2022nonconvex} introduced a non-convex framework for matrix completion with linearly parameterized factors, further enriching the landscape of low-rank matrix recovery methods.

Despite these numerical advances, matrix factorization remains inherently limited. First, its non-convex nature means there are no general guarantees of finding a global solution, making these methods particularly vulnerable to issues such as ill-conditioning, sensitivity to initialization, and challenges in selecting the appropriate rank \citep{keshavan2010matrix, jain2013low}. Second, while low-rank matrix recovery can be formulated as a rank minimization problem, this approach is unfortunately NP-hard
\citep{natarajan1995sparse}. As a result, existing algorithms for its exact solution are impractical due to the infeasibly large demands on time and computational resources \citep{chistov1984complexity}. 
To address these challenges, a popular alternative is the convex relaxation of rank, referred to as the nuclear norm, which is defined to be the sum of the singular values of the matrix. A large body of literature  \citep{recht2010guaranteed,candes2010matrix,candes2010power,koltchinskii2011nuclear,recht2011simpler,gross2011recovering,candes2012exact,klopp,klopp2015adaptive} considers the nuclear-norm regularization for low-rank matrix completion and provides theoretical guarantees for its effectiveness. 

However, much of the existing work operates under the assumption of a uniform missing structure, 
where every entry in the matrix is assumed to be observed with equal probability \citep{candes2010matrix,candes2010power,koltchinskii2011nuclear,candes2012exact,hastie2015matrix,bhaskar2016probabilistic,cherapanamjeri2017nearly,bi2017group,chen2020noisy,xia2021statistical,farias2022uncertainty}.   
In reality, this sampling assumption is often impractical for real-world applications. For instance, regarding the well-known Netflix Prize \citep{bennett2007netflix}, the dataset can be represented by a low-rank matrix with users as rows, movies as columns, and ratings as entry values. Certain movies are rated more frequently than others, and some users rate more movies than others, leading to a non-uniform distribution of known entries.
Consequently, there has been increasing attention on non-uniform sampling, with related works generally falling into two categories: missing at random (MAR) \citep{srebro2010collaborative,kiraly2015algebraic,chen2015completing,cho2017asymptotic} and missing not at random (MNAR) \citep{ma2019missing,sportisse2020imputation, jin2022matrix, lipairwise}. Since the case of MNAR is known to be particularly challenging due to the identification  issue \citep{little2019statistical}, 
we primarily focus on the MAR scenario following the work \citep{macdonald2002missing}.

 Under the MAR setting, \cite{klopp} applied the standard nuclear norm penalty to deal with general sampling distributions and provided theoretical insights on the estimation errors measured in the Frobenius norm. 
However, 
it is reported empirically that the nuclear norm overestimates the matrix's rank \citep{wang2021matrix}.
Alternatively, the max-norm was first proposed by \cite{srebro2004maximum} for matrix completion under non-uniform sampling mechanisms. Later, \cite{cai2016matrix} proved its theoretical superiority over the nuclear norm for noisy matrix completion under a general sampling model. Sequentially,  \cite{fang2018max} proposed a more flexible estimator, called the hybrid regularizer, which incorporates both max-norm and nuclear norm. They demonstrated the performance of this hybrid regularizer in comparison to the max-norm and nuclear norm under various settings. However, the hybrid approach only achieves a sub-optimal rate for recovering exact low-rank matrices under a uniform sampling scheme, compared to the nuclear norm, and it comes at the cost of a high computational burden due to the use of semidefinite programming (SDP) \citep{srebro2004maximum}.

Another promising approach that balances computational efficiency and recovery performance is the transformed $L_1$ (TL1) regularization, originating in sparse recovery  \citep{zhang2018minimization,ma2019transformed}. When applied to a vector, TL1 can interpolate between the $L_0$ semi-norm and the $L_1$ norm with a hyper-parameter $a \in (0, \infty)$. Consequently, when applied to the vector composed of the singular values of a matrix, TL1 can approximate both the rank and the nuclear norm of the matrix \citep{zhang2015transformed}. This non-convex regularization offers a closer approximation to the rank function than the convex nuclear norm, better capturing the low-rank structure of the matrix and retaining the computational complexity of nuclear norm minimization. 
\cite{zhang2015transformed} provided the convergence analysis of the numerical algorithm under a uniform sampling scheme. 
However, to the best of our knowledge, TL1 regularization has been empirically explored for low-rank matrix completion under uniform sampling, while its theoretical guarantees and recovery performance under more general sampling schemes have yet to be thoroughly examined.

In this paper, we provide a comprehensive statistical analysis of the estimator derived from a TL1-regularized recovery model under a general sampling distribution. Particularly, we demonstrate that for a sufficiently large $a$, the estimator of the target matrix using TL1 regularization achieves the optimal convergence rate on the Frobenius norm error, comparable to that of the nuclear norm-based model. This theoretical guarantee aligns with the property of the TL1 function: as $a\rightarrow \infty$, the TL1 function approximates the nuclear norm more closely. 
When $a$ is small enough, we establish a sub-optimal convergence rate for the estimation error and an upper bound on the rank of the estimator. 
Our results imply that by choosing a smaller value of $a$, the estimated matrix tends to have a lower rank, which can be advantageous in applications where a low-rank solution is preferred for reasons such as simplicity or interpretability. This phenomenon is in line with another property of the TL1 function: as $a\rightarrow 0$, the TL1 function behaves more like the rank function. In this sense, TL1 serves as a valuable theoretical tool to enhance our understanding of the performance between rank minimization and nuclear norm minimization.

Moreover, TL1 is a practical tool, offering significant improvements in accuracy over other methods, regardless of whether it is under uniform or non-uniform sampling with noisy data. The effectiveness of TL1 regularization is demonstrated through a comprehensive simulation study under various missing data mechanisms, highlighting TL1 regularization's adaptability and robustness. 
We also validate its performance using two real-world data applications, providing empirical evidence of its superiority in practical scenarios. Furthermore, the role of the hyper-parameter $a$ is examined empirically through numerical experiments, which reveals the trade-off between error and rank estimation. 

To the best of our knowledge, this work is the first to provide a statistically theoretical analysis of error bounds for TL1 regularization in matrix completion. In fact, it is the pioneering theoretical analysis of a nonconvex method for matrix completion, shedding light on the analysis of other nonconvex regularizations for matrix completion.
Additionally, we want to emphasize that it is technically challenging to establish our theorems, as some techniques and properties associated with convex approaches, such as the nuclear norm \citep{candes2010matrix,klopp} and max-norm \citep{cai2016matrix}, are not directly applicable to the nonconvex TL1 method. And yet, our results are on par with those in convex scenarios. Specifically, Theorems \ref{Them1} and \ref{Them2} are minimax optimal (up to a logarithmic factor) in terms of the order with the current literature. Empirically, we demonstrate through extensive experiments that TL1 generally achieves significant improvements over its convex counterparts (including the nuclear norm, max-norm, and a hybrid approach) across various noise levels and sampling distributions.

\section{PRELIMINARIES}

\subsection{Problem Setup}

We aim to reconstruct an underlying matrix $A_0 \in \mathbb{R}^{m_1 \times m_2}$ from its partial entries coded by a set of matrices $T_i \in \mathbb{R}^{m_1 \times m_2}$, $i =1,\dots, n$, which are i.i.d.~copies of a random indicator matrix with distribution $\Pi=(\pi_{kl})_{k,l=1}^{m_1, m_2}$ over the set:
\begin{equation}
\setlength{\abovedisplayskip}{2pt}
\setlength{\belowdisplayskip}{2pt}
    \Gamma=\{e_k(m_1)e_l^\top(m_2), k\in[m_1], l\in [m_2]\},
\end{equation}

where $\pi_{kl}$ is the probability that a particular sample is at the location $(k,l)$, $e_k(m_j)$ represents the canonical basis vector in $\mathbb{R}^{m_j}$ whose $k$-th entry is 1 and other entries are zeroes, $[m_j]=\{1,\dots,m_j\}$ for $j=1,2,$ and $n$ denotes the number of observed samples. 
In other word, we observe $n$  independent pairs $(T_i, Y_i)$ with $T_i\in \Gamma$ and $Y_i\in \mathbb R$ that adhere to 
the trace regression model \citep{negahban2011estimation}
\begin{equation} \label{trace_reg_mod}
\setlength{\abovedisplayskip}{2pt}
\setlength{\belowdisplayskip}{2pt}
    Y_i=\text{tr}(T_i^\intercal A_0) + \sigma\xi_i = \langle T_i, A_0 \rangle + \sigma\xi_i, 
\end{equation}
where $\sigma >0$ denotes the standard deviation and $\xi_i$ are independent noise variables with $\mathbb{E}(\xi_i) =0$ and $\mathbb{E}(\xi_i^2) = 1$. 
Our goal is to estimate the matrix $A_0$ from observations $Y_i$ and $T_i$, $i=1,\dots,n$.

\subsection{Notations}

We introduce the notations to be used throughout this paper. For a matrix $A\in\mathbb{R}^{m_1 \times m_2}$, we define constants $m = \min\{m_1, m_2\} = (m_1 \wedge m_2)$, $M = \max\{m_1,m_2\} = (m_1 \vee m_2),$ and $d = m_1 + m_2$. We denote the trace of the matrix $A$ by $\text{tr}(A)$.
We define two standard matrix norms, i.e.,
\begin{equation*}
\|A\|_\infty = \underset{k,l}{\text{max}} |A(k, l)|  \ \mbox{and} \ \|A\|_F = \sqrt{\underset{k,l}{\sum}A^2(k, l)}, 
\end{equation*}
where $A(k, l)$ denotes the value of $(k,l)$-th entry of $A$. Denote $\sigma_j(A)$ as the $j$th singular values of $A$  in decreasing order, then the nuclear norm is defined as $\|A\|_* = \sum_{j=1}^{m} \sigma_j(A)$ and the spectral norm $\|A\| = \sigma_1(A)$. Given the sampling distribution $\Pi$, we define 
$L_2(\Pi)$ norm of $A$  by 
    $\|A\|^2_{L_2(\Pi)}=\mathbb{E}(\langle A, T \rangle^2)=\sum_{k=1}^{m_1}\sum_{l=1}^{m_2} \pi_{kl}A^2(k,l).$

Lastly, our analysis requires the following asymptotic notations. For two non-negative sequences $\{a_n\}$ and $\{b_n\}$, we say $a_n = \mathcal{O}(b_n)$ if there exists a constant $C$ such that  $a_n \leq Cb_n$ and $a_n = \mathcal{O}_p(b_n)$ if there exists a constant $C'$ such that  $a_n \leq C'b_n$ with high probability; $a_n = \smallO(b_n)$ if there is a constant $C''$ such that $a_n < C''b_n$. We denote $a_n \asymp b_n$ if $a_n = \mathcal{O}(b_n)$ and $b_n = \mathcal{O}(a_n)$.

\subsection{Model Formulation}
\label{sec:problem}
TL1 regularization has been used to promote the sparsity in signal recovery \citep{zhang2018minimization}, which can be applied to the singular values of a matrix for low rankness \citep{zhang2015transformed}. Specifically, the TL1 on the matrix $A$ is defined by
\begin{equation}\label{eq:TL1}
\setlength{\abovedisplayskip}{2pt}
\setlength{\belowdisplayskip}{2pt}
    \text{TL1}_a(A) = \sum_{j=1}^{m} \frac{(a+1)\sigma_j(A)}{a+\sigma_j(A)}, 
\end{equation}
with an internal parameter $a \in (0,\; \infty)$.
The behavior of the TL1 function varies significantly with the parameter $a$. Specifically, the function has two useful limits:
\setlength{\abovedisplayskip}{4pt}
\setlength{\belowdisplayskip}{4pt}
\begin{align}\label{limits}
    \underset{a \rightarrow 0+}{\lim} \text{TL1}_a(A) = \rank(A), \; \underset{a \rightarrow \infty}{\lim} \text{TL1}_a(A) = \|A\|_*,
\end{align}
allowing the TL1 penalty to act as a bridge between the $L_0$ semi-norm, which counts non-zero singular values to measure matrix rank directly, and the nuclear norm, which sums singular values as a convex relaxation of the rank. Unlike the $L_0$ semi-norm, the TL1 penalty with $a>0$ is continuous everywhere, which is beneficial for optimization.  Since TL1 imposes a heavier penalty on smaller singular values, pushing them toward zero more aggressively than the nuclear norm does, it can induce lower-rank solutions than the nuclear norm, particularly at smaller values of $a$. However, its non-convex nature poses challenges because non-convex functions typically have multiple local minima and possibly saddle points, complicating the search for a global minimum.

To estimate the target matrix $A_0$, we incorporate the TL1 function into a least-squares fit of the trace regression, thus leading to 
\setlength{\abovedisplayskip}{3pt}
\setlength{\belowdisplayskip}{3pt}
\begin{align}\label{est:hatA}
    \hat{A} = \underset{\|A\|_\infty \leq \zeta}{\arg\min} \bigg\{ \frac{1}{n} \sum_{i=1}^{n} (Y_i - \langle T_i, A \rangle)^2 + \lambda \text{TL1}_a (A) \bigg\}, 
\end{align}
where $\lambda > 0$ balances between two terms and $\zeta$ is a (tunable) upper bound on the estimator.

This paper focuses on the theoretical analysis of the estimator $\hat{A}$ defined in \eqref{est:hatA}. In particular, we aim to find a non-asymptotic upper bound to quantify the difference between the estimated matrix $\hat{A}$ and the true matrix $A_0$, measured by the Frobenius norm. Our analysis requires $\|\hat A\|_\infty\leq \zeta$, while $\zeta$ can be determined in practice, e.g., $\zeta=5$ for Netflix ratings in the range of $[0,5]$.  To make the paper self-contained, we include an efficient algorithm for minimizing \eqref{est:hatA} via the alternating direction method of multipliers (ADMM) \citep{boydPCPE11admm}  in Appendix \ref{A4:Algorithm}.

\section{THEORETICAL PROPERTIES}

Define $C_l = \sum_{k=1}^{m_1} \pi_{kl}$ as the probability that an observation appears in the $l$-th column and $R_k = \sum_{l=1}^{m_2} \pi_{kl}$ as the probability that an observation appears in the $k$-th row, 
where $k\in [m_1]$ and $l\in[m_2]$. By the definitions along with the constraints $\sum_{l=1}^{m_2} C_l = 1$ and $\sum_{k=1}^{m_1} R_k = 1$, we have $\underset{l}{\max}\; C_l \geq 1/m_2$ and $\underset{k}{\max} \; R_k \geq 1/m_1$,  implying that $\underset{k,l}{\max}(R_k, C_l) \geq 1/m$. 

Our theoretical analysis requires the following three assumptions:
 
\begin{assumption}\label{assump1}
   There exists a constant $L \geq 1$ such that the maximum of $R_k$ and $C_l$ over $k$ and $l$ has an upper bound: $ \underset{k,l}\max(R_k, C_l) \leq L/m$.
\end{assumption}

\begin{assumption}\label{assump2}
    There exists a constant $\nu \geq 1$ such that $1/(\nu m_1 m_2) \leq \pi_{kl} \leq \nu/(m_1 m_2).$
\end{assumption}

\begin{assumption}\label{assump3}
    There exists a constant $c_0 > 0$ such that $\underset{i=1,\dots,n}\max\EE [\exp (|\xi_i|/c_0)] \leq e$, where $e$ is the base of the natural logarithm. 
\end{assumption}

These assumptions are commonly used in the literature \citep{koltchinskii2011nuclear,klopp,cai2016matrix, klopp2017robust}. 
In Assumption \ref{assump1}, a larger value of $L$ indicates greater imbalance in sampling, leading to a non-uniform setting. 
Assumption \ref{assump2} implies 
that every matrix entry has a non-zero probability of being observed and we have $1/(\nu m_1m_2) \|A\|_F^2 \leq \|A\|_{L_2(\Pi)}^2 \leq \nu/(m_1m_2) \|A\|_F^2$, 
where $\nu$ is a constant independent of $n$ or the matrix dimensions.

Therefore, these two assumptions maintain a balance in the influence exerted by each row and column, ensuring that every matrix element has a chance of being observed, thereby avoiding issues where certain data points are never seen.
For a uniform distribution of observed elements, $L$ and $\nu$ are taken as 1.
Assumption \ref{assump3} specifies the sub-exponential 
distribution of the noise term, which is a mild assumption on the noise.

As revealed in Section \ref{sec:problem}, TL1 regularization interpolates between the rank and the nuclear norm of matrices depending on the value of $a$. In the following, we discuss the theoretical properties of the estimator within two regimes: Regime 1 is when $a$ is large, while Regime 2 is when $a$ is small.

\subsection{Regime 1: when \texorpdfstring{$a$}{a} is large}

We demonstrate in Theorems \ref{Them1} and \ref{Them2} that the estimator obtained by TL1 regularization in \eqref{est:hatA} achieves the same convergence rate as that obtained by nuclear norm regularization, 
which implies that the TL1 regularization behaves like the nuclear norm when $a$ is asymptotically large, i.e., $a^{-1} = \mathcal{O}((\zeta \sqrt{m_1m_2})^{-1})$. 

We begin with Theorem \ref{Them1} to establish an upper bound for the estimation error when the true matrix $A_0$ is approximately low-rank in the sense that the nuclear norm of $A_0$ is properly controlled.

\begin{theorem}\label{Them1}
\vspace{-1mm}
   Suppose Assumptions \ref{assump1}-\ref{assump3} hold, $A_0 \in \mathbb{R}^{m_1 \times m_2}$ is approximately low-rank in the sense that $\|A_0\|_*/\sqrt{m_1m_2} \leq \gamma$ for a constant $\gamma > 0$, and $\|A_0\|_\infty \leq \zeta$ for a constant $\zeta$. 
   Take $\lambda \asymp \frac{(\zeta \vee \sigma)}{\sqrt{m_1m_2}} \frac{a + \zeta \sqrt{m_1m_2}}{1+a} \sqrt{\frac{Ld\log d}{n}}$, where $d = m_1+m_2$, then for any $n \gtrsim d\log d $ and  $a^{-1} = \mathcal{O}((\zeta \sqrt{m_1m_2})^{-1})$, there exist two constants $C_1$ and $C_2$ only depending on $c_0$ such that the estimator $\hat A$ from \eqref{est:hatA} satisfies
    \begin{multline}\label{ineq:thm1}
         \frac{1}{m_1m_2}\|\hat{A} - A_0\|_F^2 \leq C_1 \nu (\zeta \vee \sigma) \gamma \sqrt{\frac{Ld\log d}{n}} \\
         + C_2 \nu \zeta^2 \sqrt{\frac{L\log d}{n}},
    \end{multline}
   with probability at least $1-(\kappa+1)/d$, where $\kappa$ is a constant depending on $L$.
   \vspace{-2mm}
\end{theorem}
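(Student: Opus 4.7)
The plan is to adapt the nuclear-norm analysis of \citet{klopp} to the TL1 penalty by exploiting the fact that, when $a$ is large compared with the maximum possible singular value $\zeta\sqrt{m_1 m_2}$, $\text{TL1}_a$ is sandwiched between two constant multiples of $\|\cdot\|_*$, reducing the problem to an (appropriately perturbed) nuclear-norm analysis.

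First, I would derive the basic inequality from the optimality of $\hat A$ in \eqref{est:hatA}: plugging $Y_i=\langle T_i,A_0\rangle+\sigma\xi_i$ and setting $\Sigma_R=\frac{1}{n}\sum_i \xi_i T_i$, I obtain
\begin{equation*}
\frac{1}{n}\sum_{i=1}^n \langle T_i,\hat A-A_0\rangle^2 \le 2\sigma\langle \Sigma_R,\hat A-A_0\rangle + \lambda\bigl[\text{TL1}_a(A_0)-\text{TL1}_a(\hat A)\bigr].
\end{equation*}
The function $f_a(s)=(a+1)s/(a+s)$ is concave on $[0,\infty)$ with $f_a(0)=0$, so $f_a(s)/s$ is decreasing, yielding $\frac{a+1}{a+C}\,s\le f_a(s)\le\frac{a+1}{a}\,s$ for $s\in[0,C]$. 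Applying this singular-value-wise with $C=\zeta\sqrt{m_1 m_2}$ (valid since $\|A\|_\infty\le\zeta$ forces $\sigma_1(A)\le\|A\|_F\le\zeta\sqrt{m_1 m_2}$) gives $\frac{a+1}{a+C}\|A\|_*\le\text{TL1}_a(A)\le\frac{a+1}{a}\|A\|_*$. In the regime $a^{-1}=\mathcal{O}((\zeta\sqrt{m_1m_2})^{-1})$ these two multipliers are of the same order, and the prescribed $\lambda\asymp\frac{a+\zeta\sqrt{m_1 m_2}}{a+1}\lambda_*$ with $\lambda_*\asymp(\zeta\vee\sigma)\sqrt{Ld\log d/n}/\sqrt{m_1 m_2}$ is exactly tuned so that, after invoking $\|A_0\|_*-\|\hat A\|_*\le \|\hat A-A_0\|_*$, the penalty gap is dominated by $O(1)\lambda_*\|\hat A-A_0\|_*+O(1)\lambda_*\|A_0\|_*$.

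For the stochastic pieces, duality gives $|\langle\Sigma_R,\hat A-A_0\rangle|\le\|\Sigma_R\|\,\|\hat A-A_0\|_*$, and a matrix Bernstein-type inequality applied to the independent zero-mean sub-exponential matrices $\xi_i T_i$ (with row/column variances controlled by $L/m$ via Assumption~\ref{assump1} and sub-exponential tails via Assumption~\ref{assump3}) yields $\|\Sigma_R\|\lesssim\sqrt{L\log d/(nm)}$ with probability at least $1-1/d$. The chosen $\lambda$ dominates $2\sigma\|\Sigma_R\|$ by a constant, so the noise term is absorbed and contributes the $\gamma\sqrt{Ld\log d/n}$ piece of \eqref{ineq:thm1}. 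For the left-hand side I would pass from the empirical quadratic form $\frac{1}{n}\sum_i\langle T_i,B\rangle^2$ to the population norm $\|B\|_{L_2(\Pi)}^2$ via a peeling/contraction argument restricted to matrices with $\|B\|_\infty\le 2\zeta$; this is where the $\nu\zeta^2\sqrt{L\log d/n}$ residual appears, and Assumption~\ref{assump2} then converts $\|B\|_{L_2(\Pi)}^2$ into $\|B\|_F^2/(\nu m_1 m_2)$. Combining everything with $\|A_0\|_*\le\gamma\sqrt{m_1 m_2}$ and solving the resulting quadratic-type inequality in $\|\hat A-A_0\|_F$ produces \eqref{ineq:thm1}.

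The main obstacle is the sandwich step. Standard nuclear-norm proofs rely on the sub-differential $G\in\partial\|\cdot\|_*$ and the duality $\langle G,B\rangle\le\|B\|_*$, which is unavailable because $\text{TL1}_a$ is non-convex, and direct linearization via $f_a$ would introduce factors $f_a'(\sigma_j)=(a+1)a/(a+\sigma_j)^2$ that are not uniformly comparable across singular values. The concavity-based sandwich is my substitute, but it forces me to track the ratio $(a+\zeta\sqrt{m_1 m_2})/a$ between the upper and lower nuclear-norm coefficients; matching this ratio against the prescribed scaling of $\lambda$ is precisely what pins down the large-$a$ regime $a\gtrsim\zeta\sqrt{m_1 m_2}$ required in the statement.
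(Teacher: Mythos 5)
Your overall architecture matches the paper's: the basic inequality from optimality, the sandwich $\frac{a+1}{a+\zeta\sqrt{m_1m_2}}\|A\|_*\le \mathrm{TL1}_a(A)\le\frac{a+1}{a}\|A\|_*$ (the paper derives the lower bound as $\mathrm{TL1}_a(A)\ge\frac{1+a}{a+\sigma_1(A)}\|A\|_*$ and the upper bound identically), trace duality plus matrix Bernstein for $\|\Sigma\|$, and a contraction/peeling argument (the paper's Lemmas 1--2) to pass from the empirical quadratic form to $\|\cdot\|_{L_2(\Pi)}^2$ with the $\zeta^2\sqrt{L\log d/n}$ and $\zeta\|\hat A-A_0\|_*\sqrt{LM\log d/n}/\sqrt{m_1m_2}$ residuals. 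The regime $a\gtrsim\zeta\sqrt{m_1m_2}$ is used exactly as you identify, to make $\lambda\cdot\frac{1+a}{a}\|A_0\|_*\lesssim(\zeta\vee\sigma)\gamma\sqrt{Ld\log d/n}$.

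However, your closing move is the wrong one for this theorem and leaves a genuine gap. By applying the triangle inequality in the direction $\|A_0\|_*-\|\hat A\|_*\le\|\hat A-A_0\|_*$, you end up with a bound of the form $\frac{1}{\nu m_1m_2}\|\hat A-A_0\|_F^2\lesssim \lambda_*\|\hat A-A_0\|_*+\lambda_*\gamma\sqrt{m_1m_2}+\zeta^2\sqrt{L\log d/n}$, and you propose to finish by ``solving a quadratic-type inequality in $\|\hat A-A_0\|_F$.'' That step requires converting $\|\hat A-A_0\|_*$ into $\|\hat A-A_0\|_F$ via a rank bound, which is available in the exactly low-rank setting of Theorem 2 (where $\|\hat A-A_0\|_*\le\sqrt{2\tau}\|\hat A-A_0\|_F$) but not here: under the hypothesis $\|A_0\|_*/\sqrt{m_1m_2}\le\gamma$ the difference $\hat A-A_0$ can have rank as large as $m$, and the resulting term $\lambda_*\sqrt{m}\,\|\hat A-A_0\|_F$ produces an extra factor of $m$ that destroys the rate in \eqref{ineq:thm1}. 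The paper instead applies the triangle inequality in the other direction, $\|\hat A-A_0\|_*\le\|\hat A\|_*+\|A_0\|_*$, to \emph{every} occurrence of $\|\hat A-A_0\|_*$ (the noise term, the peeling residual, and the penalty gap), and then uses $-\lambda\,\mathrm{TL1}_a(\hat A)\le-\lambda\frac{1+a}{a+\zeta\sqrt{m_1m_2}}\|\hat A\|_*$ together with the prescribed size of $\lambda$ to make the \emph{net} coefficient of $\|\hat A\|_*$ non-positive; what remains is a linear bound in $\|A_0\|_*\le\gamma\sqrt{m_1m_2}$ and no quadratic inequality at all. Your argument can be repaired (e.g., by first deducing $\|\hat A\|_*\lesssim\|A_0\|_*$ from the penalty inequality when $2\|\Sigma\|\le c\lambda\frac{1+a}{a+\zeta\sqrt{m_1m_2}}$ with $c<1$), but as written the final step would fail.
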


Please refer to Appendix \ref{A1:Theorem1,3} for the proof of Theorem \ref{Them1}. 

When $n \gtrsim d\log d $, the first component in \eqref{ineq:thm1} dominates in our bound
and it is comparable to the result for nearly low-rank matrices in \cite{negahban2012restricted}. Though the second component with order $\bigO(\sqrt{(\log d)/n})$ is slightly discrepant with a higher order term ($\bigO(n^{-1})$) compared to \cite{negahban2012restricted}, it is negligible. Additionally, our bound reaches the minimax optimal rate up to a logarithmic order under the uniform sampling  \citep{cai2016matrix}.

When $A_0$ is exactly low-rank, Theorem \ref{Them2} shows that our estimator can achieve a tighter bound than the one in Theorem \ref{Them1}. This is the same situation as the estimation error for nuclear norm regularization
\citep{koltchinskii2011nuclear,klopp}. 
However, the proof for the case of the nuclear norm \citep{negahban2012restricted,klopp} is not applicable for TL1 regularization, because, unlike the convex nuclear norm, the triangle inequality does not hold for TL1. 
Instead, we carefully analyze the gradient of the TL1 function to obtain the bound; please refer to Appendix \ref{A2:Them2} 
for the proof of Theorem \ref{Them2}.

\begin{theorem}\label{Them2}

    Suppose Assumptions \ref{assump1}-\ref{assump3} hold, 
     $A_0 \in \mathbb{R}^{m_1 \times m_2}$ is exactly low-rank, i.e., $\mathrm{rank}(A_0) \leq \tau$ for an integer $\tau$, and $\|A_0\|_\infty \leq \zeta$ for a constant $\zeta$.  
   Take $\lambda \asymp \frac{(\zeta \vee \sigma)}{\sqrt{m_1m_2}} \frac{a + \zeta \sqrt{m_1m_2}}{1+a} \sqrt{\frac{Ld\log d}{n}}$, where $d = m_1+m_2$, then for any $n \gtrsim d\log d $ and  $a^{-1} = \mathcal{O}((\zeta \sqrt{m_1m_2})^{-1})$, there exist two constants $C_3$ and $C_4$ only depending on $c_0$ such that the estimator $\hat A$ from \eqref{est:hatA} satisfies
    \begin{multline}\label{ineq:thm2}
        \frac{1}{m_1m_2}\|\hat{A} - A_0\|_F^2 \leq C_3 \nu^2(\zeta^2 \vee \sigma^2) \rank(A_0) \frac{Ld\log d}{n}\\
        + C_4 \nu \zeta^2 \sqrt{\frac{L\log d}{n}},
    \end{multline}
     with probability at least $1-(\kappa+1)/d$, where $\kappa$ is a constant depending on $L$.
\end{theorem}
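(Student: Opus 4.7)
My plan is to follow the broad outline of the nuclear-norm analysis in \cite{klopp}, replacing each convexity-based step with a tool tailored to the non-convex TL1 penalty. Writing $\Delta := \hat{A} - A_0$ and $\Sigma_R := n^{-1}\sum_{i=1}^n \xi_i T_i$, the first step is to expand $F(\hat A)\le F(A_0)$ using $Y_i = \langle T_i, A_0\rangle + \sigma\xi_i$ to obtain the basic inequality $\|\Delta\|_{L_2(\Pi_n)}^2 \le 2\sigma\langle \Sigma_R,\Delta\rangle + \lambda[\text{TL1}_a(A_0) - \text{TL1}_a(\hat A)]$. I will control the stochastic term by duality, $|\langle \Sigma_R,\Delta\rangle|\le\|\Sigma_R\|\,\|\Delta\|_*$, combined with a matrix-Bernstein estimate $\|\Sigma_R\|\lesssim(\zeta\vee\sigma)\sqrt{Ld\log d/(nm_1m_2)}$ that holds under Assumptions~\ref{assump2}--\ref{assump3}; the stated choice of $\lambda$ is calibrated so that it dominates $\sigma\|\Sigma_R\|$ up to the $a$-dependent factor $(a+\zeta\sqrt{m_1m_2})/(1+a)$.

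The central new ingredient, standing in for the triangle inequality that drives the nuclear-norm proof, is a cone-type bound for TL1. I will take the SVD $A_0 = U_0\Sigma_0 V_0^\top$ and decompose $\Delta = \Delta_1 + \Delta_2$ with $\Delta_2 := P_{U_0^\perp}\Delta P_{V_0^\perp}$, so that $\rank(\Delta_1)\le 2\tau$. Since $A_0$ and $\Delta_2$ have mutually orthogonal row and column spaces, the nonzero singular values of $A_0+\Delta_2$ are precisely the concatenation of those of $A_0$ and $\Delta_2$, yielding the exact identity $\text{TL1}_a(A_0+\Delta_2) = \text{TL1}_a(A_0) + \text{TL1}_a(\Delta_2)$. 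I will then invoke the Rotfel'd-type subadditivity $\text{TL1}_a(X+Y)\le\text{TL1}_a(X)+\text{TL1}_a(Y)$, which holds because $f_a(x) = (a+1)x/(a+x)$ is concave on $[0,\infty)$ with $f_a(0)=0$, and apply it to $A_0+\Delta_2 = \hat A - \Delta_1$ to arrive at $\text{TL1}_a(A_0) - \text{TL1}_a(\hat A) \le \text{TL1}_a(\Delta_1) - \text{TL1}_a(\Delta_2)$. An equivalent derivation can be carried out using the gradient $\nabla\text{TL1}_a(A)=\sum_j f_a'(\sigma_j)u_j v_j^\top$, in line with the paper's hint of a direct concavity/gradient analysis.

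To finish, I will sandwich TL1 by the nuclear norm in the large-$a$ regime: because $\|A\|_\infty\le\zeta$ forces $\sigma_j(A)\le\zeta\sqrt{m_1m_2}$, the assumption $a\gtrsim\zeta\sqrt{m_1m_2}$ supplies universal $c,c'>0$ with $c\,\|A\|_* \le \text{TL1}_a(A) \le c'\,\|A\|_*$ on every matrix appearing in the argument. Substituting the sandwich into the cone and basic inequalities and using $\|\Delta\|_*\le\|\Delta_1\|_*+\|\Delta_2\|_*$, I will extract the cone condition $\|\Delta_2\|_*\lesssim\|\Delta_1\|_*$, and therefore $\|\Delta\|_*\lesssim\sqrt{2\tau}\,\|\Delta\|_F$. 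Plugging this into the restricted-strong-convexity lemma of \cite{klopp}, which under Assumptions~\ref{assump1}--\ref{assump2} gives $\|\Delta\|_{L_2(\Pi_n)}^2 \ge (c_1/(\nu m_1 m_2))\|\Delta\|_F^2 - c_2\zeta^2\sqrt{L\log d/n}$ on this cone, produces a quadratic in $\|\Delta\|_F$ whose solution, after substituting $\lambda$, reproduces \eqref{ineq:thm2}: the first term tracks $\lambda^2\tau\nu^2 m_1m_2$ and the second is the RSC residual. The hard part will be the cone inequality of the second paragraph, since TL1 is not a norm and the standard triangle-based argument is unavailable; exploiting Rotfel'd-type subadditivity (or an equivalent direct concavity/gradient calculation) is what allows the exact low-rank structure of $A_0$ to be used.
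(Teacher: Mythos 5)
Your argument is correct and reaches the stated bound, and its skeleton matches the paper's proof in Appendix \ref{A2:Them2}: both decompose $\Delta=\hat A-A_0$ into $P_{A_0}(\Delta)+P_{A_0}^\perp(\Delta)$, both rely on the exact additivity $\mathrm{TL1}_a(A_0+P_{A_0}^\perp(\Delta))=\mathrm{TL1}_a(A_0)+\mathrm{TL1}_a(P_{A_0}^\perp(\Delta))$ (your orthogonality identity is precisely Lemma \ref{lemma4}), both exploit $\sigma_1\leq\zeta\sqrt{m_1m_2}$ and the large-$a$ regime to make $\lambda$ kill the $P_{A_0}^\perp(\Delta)$ contribution, and both finish with the rank bound $\|P_{A_0}(\Delta)\|_*\leq\sqrt{2\tau}\,\|\Delta\|_F$, Lemma \ref{lemma2}, the Bernstein bound on $\|\Sigma\|$, and a quadratic inequality in $\|\Delta\|_F$. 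The one genuinely different ingredient is how you handle the low-rank part: the paper applies a Mean Value Theorem expansion and bounds $\|\nabla\mathrm{TL1}_a(\tilde A)\|\leq (1+a)/a$ to get $\mathrm{TL1}_a(A_0)-\mathrm{TL1}_a(\hat A)\leq \frac{1+a}{a}\|P_{A_0}(\Delta)\|_*-\mathrm{TL1}_a(P_{A_0}^\perp(\Delta))$, whereas you invoke Rotfel'd-type subadditivity of $\sum_j f_a(\sigma_j(\cdot))$ for the concave $f_a$ with $f_a(0)=0$ to get the slightly tighter $\mathrm{TL1}_a(\Delta_1)-\mathrm{TL1}_a(\Delta_2)$ on the right-hand side; since $\mathrm{TL1}_a(\Delta_1)\leq\frac{1+a}{a}\|\Delta_1\|_*$, the two routes merge immediately. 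Your route is arguably cleaner --- it sidesteps the differentiability issues implicit in applying the MVT to a spectral function along a matrix segment, and it shows that, contrary to the paper's remark motivating Theorem \ref{Them2}, TL1 does satisfy subadditivity (it merely fails positive homogeneity) --- but the subadditivity of $\sum_j f(\sigma_j(X+Y))$ for general non-Hermitian $X,Y$ is a nontrivial fact (Rotfel'd/Thompson) that you must either cite or prove; it does not follow from concavity of $f_a$ alone without the accompanying weak-majorization argument for singular values of sums. Two further cosmetic points: your explicit cone condition $\|\Delta_2\|_*\lesssim\|\Delta_1\|_*$ is not needed in the paper's version (the coefficient of $\|\Delta_2\|_*$ is simply made nonpositive and the term dropped), and the sandwich constants in $c\|A\|_*\leq\mathrm{TL1}_a(A)\leq c'\|A\|_*$ must be checked on $\Delta_1,\Delta_2$, whose spectral norms are bounded by $2\zeta\sqrt{m_1m_2}$ rather than $\zeta\sqrt{m_1m_2}$ --- harmless, but worth a sentence.
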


The first component in \eqref{ineq:thm2} performs on par with the work \citep{negahban2012restricted}.
Furthermore, it dominates the second component if $n \leq d^2$, which is a mild condition given that $n\ll m_1m_2$ and $d=m_1+m_2$.
The upper bound we derive in \eqref{ineq:thm2} for exactly low-rank matrices achieves the optimal convergence rate in a minimax sense up to a logarithmic factor in line with the existing literature \citep{candes2010matrix,negahban2012restricted,klopp}.

\subsection{Regime 2: when \texorpdfstring{$a$}{a} is small}

When $a$ approaches 0, the TL1 function approximates the rank function, as indicated in \eqref{limits}. This behavior enhances the ability of TL1 regularization to control the rank of the estimated matrix, which is particularly useful in applications involving low-rank structures. We construct a theoretical foundation to estimate an upper bound of errors in recovering exactly low-rank matrices when $a$ is sufficiently small.
We start with a non-asymptotic error bound of exactly low-rank matrices for any value of $a$ that falls outside the range specified in Regime 1, i.e., 
$a = \mathcal{O}(\zeta \sqrt{m_1m_2})$ for Regime 2.

\begin{theorem}\label{Them3}
  Suppose Assumptions \ref{assump1}-\ref{assump3} hold,  $A_0 \in \mathbb{R}^{m_1 \times m_2}$ is exactly low-rank, i.e., $\mathrm{rank}(A_0) \leq \tau$ for an integer $\tau$, and $\|A_0\|_\infty \leq \zeta$ for a constant $\zeta$. Take $\lambda^{-1} = \mathcal{O} \left( \left( \frac{(\zeta \vee \sigma)}{\sqrt{m_1m_2}} \frac{a + \zeta\sqrt{m_1m_2}}{1+a} \sqrt{\frac{Ld\log d}{n}} \right)^{-1} \right)$, where $d = m_1+m_2$, then 
  for any $n \gtrsim d\log d$ and $a = \mathcal{O}(\zeta \sqrt{m_1m_2})$, there exist two constants $C_5$ and $C_6$ only depending on $c_0$ such that the estimator $\hat A$ from \eqref{est:hatA} satisfies
    \begin{multline}\label{ineq:thm3}
        \frac{1}{m_1m_2}\|\hat{A} - A_0\|_F^2 \leq C_5 \lambda \nu \frac{(1+a)\zeta \sqrt{m_1m_2}}{a+\zeta \sqrt{m_1m_2}}\rank(A_0)\\  
        + C_6 \nu \zeta^2\sqrt{\frac{L\log d}{n}},
    \end{multline}    
with probability at least $1-(\kappa+1)/d$, where $\kappa$ is a constant depending on $L$.
\end{theorem}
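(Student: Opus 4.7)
The plan is to follow the basic-inequality strategy used for Theorem~\ref{Them1} but to avoid any appeal to a triangle inequality for $\text{TL1}_a$ (which does not hold). Starting from the optimality of $\hat{A}$ in \eqref{est:hatA} and substituting $Y_i=\langle T_i,A_0\rangle+\sigma\xi_i$ gives
\begin{equation*}
\frac{1}{n}\sum_{i=1}^n\langle T_i,\hat{A}-A_0\rangle^2 + \lambda\text{TL1}_a(\hat{A}) \le \frac{2\sigma}{n}\sum_{i=1}^n\xi_i\langle T_i,\hat{A}-A_0\rangle + \lambda\text{TL1}_a(A_0).
\end{equation*}
Set $\Sigma_\xi := n^{-1}\sum_i\xi_iT_i$. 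By duality between the spectral and nuclear norms, the stochastic term on the right is bounded by $2\sigma\|\Sigma_\xi\|\|\hat A-A_0\|_*$, and a matrix Bernstein inequality under Assumptions~\ref{assump1}--\ref{assump3} yields $\|\Sigma_\xi\|=\mathcal{O}(\sqrt{Ld\log d/(nm_1m_2)})$ with probability at least $1-\kappa/d$, where $\kappa$ depends on $L$.

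The crucial technical ingredient is a sandwich inequality between $\|\cdot\|_*$ and $\text{TL1}_a$. Since the $\|A\|_\infty\le\zeta$ constraint forces $\sigma_j(A)\le\zeta\sqrt{m_1m_2}$ for every $j$, and since $\sigma\mapsto(1+a)\sigma/(a+\sigma)$ is increasing,
\begin{equation*}
\|A\|_*\le\frac{a+\zeta\sqrt{m_1m_2}}{1+a}\text{TL1}_a(A)\qquad\text{whenever }\|A\|_\infty\le\zeta.
\end{equation*}
Applying this to $A_0$ and to $\hat A$ separately and using the triangle inequality for $\|\cdot\|_*$ gives $\|\hat A-A_0\|_*\le\tfrac{a+\zeta\sqrt{m_1m_2}}{1+a}(\text{TL1}_a(A_0)+\text{TL1}_a(\hat A))$. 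The lower bound imposed on $\lambda$ in Theorem~\ref{Them3} is calibrated exactly so that $2\sigma\|\Sigma_\xi\|\cdot\tfrac{a+\zeta\sqrt{m_1m_2}}{1+a}\le\lambda/2$, allowing the stochastic term to be absorbed into $\tfrac{\lambda}{2}\text{TL1}_a(A_0)+\tfrac{\lambda}{2}\text{TL1}_a(\hat A)$. Substituting back and discarding the nonnegative $\tfrac{\lambda}{2}\text{TL1}_a(\hat A)$ leaves
\begin{equation*}
\frac{1}{n}\sum_{i=1}^n\langle T_i,\hat A-A_0\rangle^2\le\tfrac{3\lambda}{2}\text{TL1}_a(A_0).
\end{equation*}

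The exact low-rankness of $A_0$ now enters through the monotonicity bound $\text{TL1}_a(A_0)\le\rank(A_0)\cdot(1+a)\zeta\sqrt{m_1m_2}/(a+\zeta\sqrt{m_1m_2})$ (each of the at most $\rank(A_0)$ nonzero singular values contributes at most the value of the function at the upper endpoint $\zeta\sqrt{m_1m_2}$), which directly produces the first term of \eqref{ineq:thm3}. To move from the empirical quadratic form to the population norm I invoke a uniform concentration over $\{A:\|A\|_\infty\le 2\zeta\}$ (which contains $\hat A-A_0$) via the same Talagrand-type empirical-process argument that drives Theorem~\ref{Them1}; this contributes the $\zeta^2\sqrt{L\log d/n}$ residual. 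Finally, Assumption~\ref{assump2} converts $\|\cdot\|_{L_2(\Pi)}^2$ into $\|\cdot\|_F^2/(m_1m_2)$ at the cost of a factor $\nu$, and a union bound over the two high-probability events yields the probability $1-(\kappa+1)/d$.

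The main obstacle is the noise-domination step: because $\text{TL1}_a$ is neither convex nor sub-additive, none of the subgradient manipulations standard for the nuclear norm transfer over. The sandwich inequality between $\|\cdot\|_*$ and $\text{TL1}_a$ is what makes the argument go through, and the $a$-dependent factor $(a+\zeta\sqrt{m_1m_2})/(1+a)$ appearing there is precisely what (inverted) drives the leading term of \eqref{ineq:thm3}. This also clarifies why Theorem~\ref{Them3} restricts to $a=\mathcal{O}(\zeta\sqrt{m_1m_2})$ and why the resulting rate scales like $\sqrt{Ld\log d/n}$ instead of the faster $Ld\log d/n$ of Theorem~\ref{Them2}: this is the trade-off incurred when $a$ is small enough that $\text{TL1}_a$ behaves more like a rank surrogate than a nuclear-norm surrogate.
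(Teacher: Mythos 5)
Your proposal is correct and follows essentially the same route as the paper: the basic inequality from optimality, duality to bound the stochastic term by $\|\Sigma\|\,\|\hat A-A_0\|_*$, the key inequality $\mathrm{TL1}_a(A)\geq \frac{1+a}{a+\zeta\sqrt{m_1m_2}}\|A\|_*$ (your ``sandwich'') combined with the nuclear-norm triangle inequality to absorb everything into $\lambda\,\mathrm{TL1}_a$ terms, and finally $\mathrm{TL1}_a(A_0)\leq \mathrm{rank}(A_0)\,\frac{(1+a)\zeta\sqrt{m_1m_2}}{a+\zeta\sqrt{m_1m_2}}$ — this is exactly the paper's Lemma~\ref{lemma3} followed by its one-line proof of Theorem~\ref{Them3}. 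One imprecision: a uniform concentration over the sup-norm ball $\{A:\|A\|_\infty\leq 2\zeta\}$ alone is too rich a class to yield the $\zeta^2\sqrt{L\log d/n}$ residual; the paper's Lemma~\ref{lemma2} localizes by nuclear-norm radius via peeling, and the resulting bound carries an extra term $\zeta\,\frac{\|\hat A-A_0\|_*}{\sqrt{m_1m_2}}\sqrt{LM\log d/n}$ that must also be absorbed — but this is the same order as your stochastic term and is handled by the identical absorption mechanism under the stated calibration of $\lambda$, so the gap is cosmetic rather than structural.
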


Please refer to Appendix \ref{A1:Theorem1,3} for the proof.

Note that even if $\lambda$ is chosen to be of the exact order in that condition, the error bound in \eqref{ineq:thm3} is not as tight as the one of Theorem \ref{Them2}. A specific 
error bound is presented in Corollary \ref{corollary1} with appropriate choices of $\lambda$ and $a$. 
It is unclear whether this upper bound can be further sharpened, which will be left as a direction for future research.
However, by selecting a smaller $a$, we can gain some control over the rank of the estimator in Theorem \ref{Them4}.

\begin{theorem}[Low-rankness]\label{Them4}
  Suppose Assumptions \ref{assump1}-\ref{assump3} hold, $A_0 \in \mathbb{R}^{m_1 \times m_2}$ is exactly low-rank, i.e., $\mathrm{rank}(A_0) \leq \tau$ for an integer $\tau$, and $\|A_0\|_\infty \leq \zeta$ for a constant $\zeta$. 
  Take $\lambda^{-1} = \mathcal{O} \left( \left( \frac{(\zeta \vee \sigma)}{\sqrt{m_1m_2}} \frac{a + \zeta\sqrt{m_1m_2}}{1+a} \sqrt{\frac{Ld\log d}{n}} \right)^{-1} \right)$, where $d = m_1+m_2$,   
  then for any $n \gtrsim d\log d$ and $a = \smallO((m_1m_2)^{1/4})$, there exists a constant $C_7$ only depending on $c_0$ such that the estimator $\hat A$ from \eqref{est:hatA} satisfies
    \begin{align}\label{eqn:rank}
        \rank (\hat{A}) \leq  \, &C_7  \Bigg\{ \lambda^{-1} \frac{Ld \log d\sqrt{m_1m_2}}{(1+a)(a+ \sqrt{m_1m_2})n} \notag\\
        &\times \left({\sqrt{a}}/{\left( a + \sqrt{m_1 m_2} \right)^{1/4}} + 1 \right)^2  \\
       & + \rank(A_0) \left({\sqrt{a}}/{\left( a + \sqrt{m_1 m_2} \right)^{1/4}} + 1 \right) \Bigg\} \notag, 
    \end{align}
    with high probability. 
\end{theorem}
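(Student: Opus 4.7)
The plan is to bound $\rank(\hat A)$ by partitioning the nonzero singular values of $\hat A$ at a carefully chosen threshold $\eta$, and to control each piece using the Frobenius-norm bound from Theorem~\ref{Them3} together with the optimality of $\hat A$ in \eqref{est:hatA}.

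First, I would invoke Theorem~\ref{Them3} (under the same $\lambda$ condition stated here) to obtain $\|\hat A-A_0\|_F^2 \leq B$ with high probability, where $B$ denotes the right-hand side of \eqref{ineq:thm3} multiplied by $m_1m_2$. In parallel, comparing the objective at $\hat A$ with that at $A_0$ yields
\[ \lambda\,\text{TL1}_a(\hat A) \leq \lambda\,\text{TL1}_a(A_0) + \frac{1}{n}\sum_i (Y_i-\langle T_i,A_0\rangle)^2 - \frac{1}{n}\sum_i (Y_i-\langle T_i,\hat A\rangle)^2. \]
Since $\text{TL1}_a(A_0)\leq (a+1)\rank(A_0)$, and the loss-difference can be handled via the empirical-process and noise-concentration tools already developed for Theorems~\ref{Them1}--\ref{Them3}, this yields an upper bound on $\text{TL1}_a(\hat A)$ of order $(a+1)\rank(A_0)$ plus lower-order sampling/noise remainders divided by $\lambda$.

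Second, introduce a threshold $\eta>0$ and decompose $\rank(\hat A) = |\{j:\sigma_j(\hat A)\geq \eta\}| + |\{j:0<\sigma_j(\hat A)<\eta\}|$. For the count $k$ of large singular values, each contributes at least $(a+1)\eta/(a+\eta)$ to $\text{TL1}_a(\hat A)$, so $k \leq \text{TL1}_a(\hat A)\cdot (a+\eta)/[(a+1)\eta]$; independently, Mirsky's inequality gives $\sum_{j>\rank(A_0)}\sigma_j^2(\hat A)\leq \|\hat A-A_0\|_F^2\leq B$, so at most $B/\eta^2$ large singular values occur for $j>\rank(A_0)$. Combining the two bounds and optimizing $\eta$ (essentially equating them) gives a threshold whose dependence on $a$ and $\sqrt{m_1m_2}$ matches the factor $\sqrt{a}/(a+\sqrt{m_1m_2})^{1/4}$ appearing in the statement. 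For singular values in $(0,\eta)$, I would use the first-order optimality condition: because $\text{TL1}_a$ is a spectral function with gradient $U\,\mathrm{diag}(a(a+1)/(a+\sigma_j)^2)V^\top$ at $\hat A = U\Sigma V^\top$ and slope $(a+1)/a$ at zero, combined with a spectral-norm concentration bound on the empirical-loss gradient (of the type established in the proof of Theorem~\ref{Them1}), one rules out arbitrarily small positive singular values and caps their count. The hypothesis $a=\smallO((m_1m_2)^{1/4})$ ensures compatibility of this implicit threshold with the $\eta$ chosen above.

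The main obstacle is the absence of automatic sparsity: unlike the rank or $L_0$ penalty, $\text{TL1}_a$ is smooth at zero with finite slope $(a+1)/a$, so one cannot algebraically conclude that small singular values vanish. The argument must therefore become a quantitative, probabilistic analysis of the spectral first-order condition, and it is the interplay among the Frobenius-norm rate in $B$, the slope of $\text{TL1}_a$ at zero, and the chosen threshold $\eta$ that produces the precise algebraic form of the rank bound in \eqref{eqn:rank}.
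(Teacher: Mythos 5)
Your proposal follows essentially the same route as the paper: its two pillars --- an optimality-based upper bound on $\mathrm{TL1}_a(\hat A)$ of order $(1+a)\rank(A_0)$ plus a $\lambda$-weighted remainder, and a first-order-condition argument showing that the penalty gradient $\lambda a(1+a)/(a+\sigma)^2$ dominates the data-fit gradient for small $\sigma$, so that no nonzero singular value of $\hat A$ falls below $c_1(a^3+a^2\sqrt{m_1m_2})^{1/4}$ (the paper's Lemma~\ref{lemma5}, proved via a Fan--Li-type argument) --- are exactly the paper's, and dividing the $\mathrm{TL1}$ bound by the minimal per-singular-value contribution $(1+a)\eta/(a+\eta)$ at that threshold yields \eqref{eqn:rank}. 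The only substantive deviations in your sketch are that the threshold $\eta$ is dictated by the first-order condition rather than being free to optimize (so your Mirsky count $B/\eta^2$, which on its own is far weaker than the stated bound, is superfluous), and that the remainder in the $\mathrm{TL1}$ bound itself involves $\sqrt{\rank(\hat A)}$, which forces the quadratic self-bounding resolution that produces the squared factor in the first term of \eqref{eqn:rank}.
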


Theorem \ref{Them4} provides a theoretical control on the rank of the matrix estimated by the TL1 regularization, showing the estimated rank decreases as $\lambda$ increases for sufficiently small $a$.
Theorem \ref{Them4} indicates that TL1 regularization controls the rank of estimated matrices to some extent and 
implies that TL1 regularization effectively promotes sparsity in the singular values. 
To the best of our knowledge, there is no comprehensive analysis in the current literature that bounds the rank of estimated matrices by nuclear norm or max-norm.

Furthermore, there is a trade-off between controlling the estimation error and the rank. For a fixed value of $a$,  a larger value of $\lambda$ makes the first component in the bound \eqref{eqn:rank} smaller, thus resulting in a lower rank estimation. 
However, a lower rank estimator comes at the cost of a slower convergence rate of the errors in the Frobenius norm, compared to the rate specified in Theorem \ref{Them3}. This trade-off underlines the balance between achieving a low-rank representation and maintaining a high convergence rate (or minimizing error estimation). In addition, smaller $a$ makes the second component in \eqref{eqn:rank} smaller and yields a lower rank estimation, highlighting the importance of carefully choosing the hyper-parameters $\lambda$ and $a$ to optimize both rank reduction and error minimization in a balanced manner.

Combining Theorems \ref{Them3} and \ref{Them4}, we present a specific upper bound for the estimation error and the rank with appropriate choices of $\lambda$ and $a$ in Corollary \ref{corollary1}. The proofs of Theorem \ref{Them4} and Corollary \ref{corollary1} can be found in Appendix \ref{A3:Them4,Cor1}.

\begin{corollary}\label{corollary1}
  Suppose Assumptions \ref{assump1}-\ref{assump3} hold, $A_0 \in \mathbb{R}^{m_1 \times m_2}$ is exactly low-rank, i.e., $\mathrm{rank}(A_0) \leq \tau$ for an integer $\tau$. Take $\lambda \asymp \frac{(\zeta \vee \sigma)}{\sqrt{m_1m_2}} \frac{a + \zeta \sqrt{m_1m_2}}{1+a} \sqrt{\frac{Ld \log d}{n}}$, where $d = m_1+m_2$, 
 then for any $n \gtrsim d\log d$ and $a = \smallO((m_1m_2)^{1/4})$, there exist two  constants $C_8$ and $C_9$ only depending on $c_0$ such that the estimator $\hat A$ from \eqref{est:hatA} satisfies
    \begin{multline}
        \frac{1}{m_1m_2}\|\hat{A} - A_0\|_F^2 \leq C_8 \nu (\zeta^2 \vee \sigma^2) \rank(A_0) \sqrt{\frac{Ld\log d}{n}}\\
        + C_9 \nu \zeta^2\sqrt{\frac{L\log d}{n}},
    \end{multline} 
    with probability at least $1-(\kappa+1)/d$. Using the condition that $d\log d = \bigO( n)$, we have 
    \begin{equation}\label{eq:order-comp}
        \mathrm{rank}(\hat{A}) = \mathcal{O}_p (\rank(A_0)).   
    \end{equation}
\end{corollary}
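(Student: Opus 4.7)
The proof is essentially a substitution exercise: plug the specific choice of $\lambda$ into the bounds from Theorems \ref{Them3} and \ref{Them4}, then use the regime condition $a = \smallO((m_1m_2)^{1/4})$ to simplify. My plan is to first check that the prescribed $\lambda$ satisfies the order condition $\lambda^{-1} = \mathcal{O}((\cdot)^{-1})$ in both theorems (it does by construction, $\asymp$ implies $\mathcal{O}$), so both results are directly applicable.

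For the error bound, I would substitute $\lambda \asymp \frac{(\zeta \vee \sigma)}{\sqrt{m_1m_2}} \frac{a + \zeta\sqrt{m_1m_2}}{1+a}\sqrt{\frac{Ld\log d}{n}}$ into the first term on the right-hand side of \eqref{ineq:thm3}, namely $C_5 \lambda \nu \frac{(1+a)\zeta\sqrt{m_1m_2}}{a+\zeta\sqrt{m_1m_2}} \rank(A_0)$. The factors $\frac{a+\zeta\sqrt{m_1m_2}}{1+a}$ and $\frac{(1+a)\zeta\sqrt{m_1m_2}}{a+\zeta\sqrt{m_1m_2}}$ multiply to $\zeta\sqrt{m_1m_2}$, cancelling the $\sqrt{m_1m_2}$ in the denominator of $\lambda$ and yielding $C_5 \nu (\zeta \vee \sigma)\zeta \rank(A_0) \sqrt{(Ld\log d)/n}$. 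Bounding $(\zeta\vee\sigma)\zeta \leq \zeta^2 \vee \sigma^2$ gives the $C_8$-term, and the second term in \eqref{ineq:thm3} transfers verbatim to the $C_9$-term.

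For the rank bound, I would substitute the same $\lambda$ into \eqref{eqn:rank}. Writing $\lambda^{-1} \asymp \frac{\sqrt{m_1m_2}}{\zeta\vee\sigma}\frac{1+a}{a+\zeta\sqrt{m_1m_2}}\sqrt{n/(Ld\log d)}$, the first summand in \eqref{eqn:rank} becomes
\begin{equation*}
\frac{m_1 m_2}{(\zeta\vee\sigma)(a+\zeta\sqrt{m_1m_2})(a+\sqrt{m_1m_2})}\sqrt{\frac{Ld\log d}{n}}\left(\frac{\sqrt{a}}{(a+\sqrt{m_1m_2})^{1/4}}+1\right)^{2}.
\end{equation*}
Since $a = \smallO((m_1m_2)^{1/4})$ implies $a \ll \sqrt{m_1m_2}$, we have $a+\sqrt{m_1m_2} \asymp \sqrt{m_1m_2}$ and $a+\zeta\sqrt{m_1m_2} \asymp \zeta\sqrt{m_1m_2}$, so the leading pre-factor reduces to an $\mathcal{O}(1/((\zeta\vee\sigma)\zeta))$ term. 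Moreover, $\sqrt{a}/(a+\sqrt{m_1m_2})^{1/4} \leq \sqrt{a}/(m_1m_2)^{1/8} = \smallO(1)$, so the squared parenthesis is $\mathcal{O}(1)$. Invoking $d\log d = \mathcal{O}(n)$ from the hypothesis makes $\sqrt{(Ld\log d)/n} = \mathcal{O}(1)$, so the first summand is $\mathcal{O}(1) = \mathcal{O}(\rank(A_0))$ whenever $A_0 \neq 0$. The second summand in \eqref{eqn:rank} is $\rank(A_0)\cdot(\smallO(1)+1) = \mathcal{O}(\rank(A_0))$ directly, giving \eqref{eq:order-comp}.

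The calculation is routine; the only subtlety is confirming that the regime $a = \smallO((m_1m_2)^{1/4})$ is strong enough to control both the auxiliary factor $\sqrt{a}/(a+\sqrt{m_1m_2})^{1/4}$ and the pre-factor coming from the $a$-dependent denominators simultaneously. Since $(m_1m_2)^{1/4} = \smallO(\sqrt{m_1m_2})$ ensures $a \ll \sqrt{m_1m_2}$, the simplifications go through cleanly, and no new probabilistic arguments are needed beyond those already contained in Theorems \ref{Them3} and \ref{Them4}, so the high-probability guarantee is inherited directly via a union bound over the two events.
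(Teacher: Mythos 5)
Your proposal is correct and follows essentially the same route as the paper's proof: substitute the prescribed $\lambda$ into the bounds of Theorems \ref{Them3} and \ref{Them4}, observe the cancellation $\frac{a+\zeta\sqrt{m_1m_2}}{1+a}\cdot\frac{(1+a)\zeta\sqrt{m_1m_2}}{a+\zeta\sqrt{m_1m_2}}=\zeta\sqrt{m_1m_2}$ for the error term, and use $a=\smallO((m_1m_2)^{1/4})$ together with $d\log d=\bigO(n)$ to show the first summand of the rank bound is $\mathcal{O}(1)$ and the auxiliary factor $\sqrt{a}/(a+\sqrt{m_1m_2})^{1/4}$ is $\smallO(1)$. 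The algebra matches the paper's two-scenario reduction, so no further comment is needed.
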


Corollary \ref{corollary1} provides a concrete upper bound for error estimation in the Frobenius norm and establishes a bound on the rank of the estimator by selecting appropriate values of $\lambda$ and $a$. When the sample size $n$ increases, the upper bound of the error decreases at a rate of $(d\log d/n)^{1/2}$, and the rank remains in a constant order, independent of the size of the matrix. This constant order bound of the rank is significantly better than the worst-case bound, which is $\bigO(m)$. 
This result is aligned with the rank of the true matrix that is bounded by a constant. Additionally, it is worth mentioning that our proof techniques, detailed in  Appendix \ref{A3:Them4,Cor1}, can be utilized for other non-convex regularizations, such as SCAD regularization \citep{fan2001variable},  
MCP regularization \citep{zhang2010nearly}, $L_p$ norm \citep{chartrand2007exact}, and $L_1/L_2$ functional \citep{rahimi2019scale} on the singular values of matrices.  
One limitation of Corollary \ref{corollary1} is that \eqref{eq:order-comp} does not imply the ideal scenario when $\mathrm{rank}(\hat{A}) = \rank(A_0)$. In future work, we will investigate whether this oracle property can be achieved by properly choosing $a$ and $\lambda$.

\section{SIMULATION STUDY}
\label{sec:4.1}

In this simulation study, we generate the target matrix $A_0 \in \mathbb{R}^{m_1 \times m_2}$ as the product of two matrices of smaller dimensions, i.e., $A_0 = UV^\intercal$, where $U\in \mathbb{R} ^{m_1 \times r}$, $V\in \mathbb{R} ^{m_2 \times r}$ with each entry of $U$ and $V$ independently sampled from a standard normal distribution $\mathcal{N}(0, 1)$. As a result, the rank of $A_0$ is at most $r$, which is significantly smaller than $\mathrm{min}(m_1, m_2)$. 

We adopt three sampling schemes outlined in \cite{fang2018max} to facilitate a direct comparison of completion results. All the sampling schemes are implemented without replacement. The first scheme involves uniform sampling of the indices of observed entries, while the subsequent two schemes sample indices in a non-uniform manner. 
Specifically, for each $(k,l) \in [m_1]\times [m_2]$,  let $p_k$ (and $p_l$) be:
\begin{table}[htbp]
    \vspace{-2mm}
    \centering
    \small
    \begin{tabular}{p{0.14\columnwidth} p{0.35\columnwidth} p{0.35\columnwidth}}
        \toprule
        \text{Scheme 1} & \text{Scheme 2} & \text{Scheme 3} \\
        \midrule
        $p_k =$ & $p_k =$ & $p_k =$ \\
        $\frac{1}{m_1 m_2}$ & 
        $\begin{cases} 
            2p_0 \; \text{if } k \leq \frac{m_1}{10} \\
            4p_0 \; \text{if } \frac{m_1}{10} < k \leq \frac{m_1}{5} \\
            p_0 \;\;\; \text{otherwise}
        \end{cases}$ & 
        $\begin{cases} 
            3p_0 \; \text{if } k \leq \frac{m_1}{10} \\
            9p_0 \; \text{if } \frac{m_1}{10} < k \leq \frac{m_1}{5} \\
            p_0 \;\;\; \text{otherwise}
        \end{cases}$ \\
        \bottomrule
    \end{tabular}
    \label{tab:schemes}
    \vspace{-5mm}
\end{table}

where \( p_0 \) is a normalized constant such that \( \sum_{k=1}^{m_1} p_k = 1 \). Let $\pi_{kl}$=$p_kp_l,$ then the sampling distribution is $\Pi=(\pi_{kl})_{k,l=1}^{m_1, m_2}$. 
Note that the configuration of Scheme 3 demonstrates a higher level of imbalance, as some entries have higher probabilities, meaning they are more likely to be observed than others. This imbalance poses a greater challenge for recovery efforts compared to Scheme 1 and Scheme 2, as evidenced by more substantial errors in Table \ref{tab:scheme3} than Tables \ref{tab:scheme1} and \ref{tab:scheme2}.

From the trace regression model \eqref{trace_reg_mod}, we define sampling ratios as $\mathrm{SR} = n/(m_1m_2)$ and set the value of $\sigma$ such that the signal-to-noise ratio, defined by $\mathrm{SNR}= 10 \mathrm{log}10 (\frac 1{\sigma^2}\sum_i^n \langle T_i, A_0\rangle^2),$ is either 10 or 20.
To quantitatively evaluate the matrix recovery performance, we use the relative error (RE) defined by
    $\mathrm{RE} = \|\hat{A} - A_0\|_F /  \|A_0\|_F $,
where $\hat{A}$ is an estimator of $A_0$. To find $\hat A$, we compare the TL1-regularized model \eqref{est:hatA} with nuclear norm \citep{candes2012exact}, max-norm \citep{cai2016matrix}, and a hybrid approach combining both the nuclear norm and max-norm \citep{fang2018max}. 
We explore various combinations of dimensions, ranks, sampling schemes, and sampling ratios (SR) under noisy cases. For each combination, we select the optimal parameters for each competing method; please refer to Appendix \ref{A6:tune_para} for more details on parameter tuning. Due to space limitations, we only display the results for SNR = 10 here. Please refer to Appendix \ref{A5:Other_res} for the results of SNR = 20.

We present the recovery comparison under the three sampling schemes in Tables \ref{tab:scheme1}, \ref{tab:scheme2}, and \ref{tab:scheme3}, respectively.
In Table \ref{tab:scheme1} where uniformly sampled data is used, our results show that the TL1 regularization method yields the most favorable recovery outcomes in a noisy setting. Notably, the hybrid approach outperforms the max-norm method, aligning with established numerical analyses in \cite{fang2018max} that posit the inferior performance of the max-norm method compared to the nuclear norm when the observed entries are indeed uniformly sampled. 
In Tables \ref{tab:scheme2} and \ref{tab:scheme3} where non-uniform sampling distributions are employed, we find that TL1 regularization performs best with a few exceptions. 
Furthermore, the results in Table \ref{tab:scheme3} are generally worse than the ones in Table \ref{tab:scheme2}, which indicates the recovery difficulty of Scheme 3 for matrix completion. In short, we conclude that the TL1-regularized approach is robust across various noise levels and sampling distributions.

\begin{table*}
    \vspace{-5mm}
    \centering
    \caption{Relative errors of the reconstructed matrix to the ground truth under Scheme 1 setting with 
    SNR=10. Each reported value is the average RE of over 50 random realizations, with standard deviation in parentheses. We highlight the best values (smallest REs) using boldface and 2nd best in italics.}\label{tab:scheme1} 
    {\fontsize{9.5}{10}\selectfont
    \begin{tabular}{cccccccccc}
    \toprule
      &(r, SR) & \multicolumn{2}{c}{Max-norm} & \multicolumn{2}{c}{Hybrid} & \multicolumn{2}{c}{Nuclear} & \multicolumn{2}{c}{TL1} \\ 
    \cline{3-10} 
      & &RE & Time & RE & Time & RE & Time  & RE & Time \\
    \midrule
      300 &(5, 0.1)& 0.350 (0.011) &16.14 & 0.217 (0.008)& 17.24& \textit{0.214 (0.008)}& 7.82& \textbf{0.076 (0.002)} &7.15\\
      &(5, 0.2)& 0.200 (0.005)& 19.64& 0.095 (0.002)& 22.10 & \textit{0.085 (0.001)}& 6.32& \textbf{0.045 (0.001)} &7.45\\
      &(10, 0.1)& 0.585 (0.011)& 24.12& \textit{0.531 (0.011)}&24.07& 0.534 (0.010)&6.54& \textbf{0.161 (0.003)} &7.84\\
      &(10, 0.2)& 0.260 (0.006)& 20.97& \textit{0.157 (0.003)}& 23.06& 0.159 (0.003)& 6.56& \textbf{0.071 (0.001)}& 7.28\\
    \midrule
      500&(5, 0.1)& 0.225 (0.007) & 49.79 & 0.118 (0.002)& 50.02 & \textit{0.115 (0.002)}& 25.82 & \textbf{0.076 (0.001)} & 28.77\\
      &(5, 0.2)& 0.134 (0.004) & 48.18 & 0.079 (0.000)& 58.31& \textit{0.073 (0.001)}& 25.88 & \textbf{0.068 (0.001)} & 29.18\\
      &(10, 0.1)& 0.324 (0.015)& 49.21& 0.244 (0.005)& 49.96& \textit{0.242 (0.005)}& 25.77&  \textbf{0.092 (0.001)} &29.18\\
      &(10, 0.2)& 0.164 (0.003) & 48.32 & 0.103 (0.001)& 59.25& \textit{0.098 (0.001)}& 25.95 & \textbf{0.081 (0.001)} & 29.33\\
    \bottomrule
    \end{tabular}
    }
\end{table*}

\begin{table*}
  \centering
  \caption{Relative errors of the reconstructed matrix to the ground truth under Scheme 2 with SNR=10.}\label{tab:scheme2}
 {\fontsize{9.5}{10}\selectfont
    \begin{tabular}{cccccccccc}


    \toprule
      &(r, SR) & \multicolumn{2}{c}{Max-norm} & \multicolumn{2}{c}{Hybrid} & \multicolumn{2}{c}{Nuclear} & \multicolumn{2}{c}{TL1} \\ 
    \cline{3-10} 
      & &RE & Time & RE & Time & RE & Time  & RE & Time \\
    \midrule
      300&(5, 0.1)& 0.347 (0.004)& 22.57& \textbf{0.338 (0.003)}& 21.02& 0.767 (0.002)& 4.79& \textit{0.402 (0.006)} &8.40\\
      &(5, 0.2)& 0.213 (0.002)& 24.68& \textit{0.193 (0.003)}& 24.56& 0.611 (0.003)& 5.66& \textbf{0.114 (0.003)} &8.50\\
      &(10, 0.1)& 0.521 (0.003)& 25.72& \textbf{0.517 (0.003)}& 21.94& 0.806 (0.001)& 4.96& \textit{0.560 (0.003)}& 7.87\\
      &(10, 0.2)& 0.265 (0.002)& 24.01& \textit{0.264 (0.003)}& 17.06& 0.620 (0.002)& 6.79& \textbf{0.188 (0.003)} &8.15\\
    \midrule
      500&(5, 0.1)& 0.282 (0.004)& 59.46& \textit{0.251 (0.003)}& 51.13& 0.758 (0.002)& 19.23& \textbf{0.106 (0.002)} &22.11\\
      &(5, 0.2)& 0.161 (0.001)& 59.66& \textit{0.149 (0.001)}& 55.75& 0.604 (0.002)& 21.55& \textbf{0.066 (0.000)} &24.36\\
      &(10, 0.1)& 0.365 (0.002)& 57.83& \textit{0.331 (0.002)}& 50.83& 0.771 (0.001)& 19.56& \textbf{0.197 (0.003)} &22.31\\
      &(10, 0.2)& 0.201 (0.000)& 58.72& \textit{0.181 (0.000)}& 57.62& 0.613 (0.001)& 21.51& \textbf{0.087 (0.001)} &24.02\\
    \bottomrule
    \end{tabular}
    }
\end{table*}

\begin{table*}
  \centering
  \caption{Relative errors of the reconstructed matrix to the ground truth under Scheme 3 with SNR=10.}\label{tab:scheme3}
{\fontsize{9.5}{10}\selectfont
 \begin{tabular}{cccccccccc}

    \toprule
      &(r, SR) & \multicolumn{2}{c}{Max-norm} & \multicolumn{2}{c}{Hybrid} & \multicolumn{2}{c}{Nuclear} & \multicolumn{2}{c}{TL1} \\ 
    \cline{3-10} 
      & &RE & Time & RE & Time & RE & Time  & RE & Time \\
    \midrule
      300&(5, 0.1)& 0.488 (0.018)& 18.92& \textbf{0.487 (0.020)}& 17.29& 0.790 (0.010)& 5.78& \textit{0.490 (0.027)} &6.35\\
      &(5, 0.2)& 0.219 (0.018)& 19.53& \textit{0.207 (0.019)}& 17.98& 0.617 (0.016)& 6.60& \textbf{0.122 (0.021)} &7.11\\
      &(10, 0.1)& 0.650 (0.015)& 19.19& \textit{0.655 (0.016)}& 16.97& 0.833 (0.007)& 5.80& \textbf{0.572 (0.016)} &6.40\\
      &(10, 0.2)& 0.279 (0.016)& 19.28& \textit{0.267 (0.018)}& 17.72& 0.638 (0.011)& 6.58& \textbf{0.208 (0.024)} &7.12\\
    \midrule
      500&(5, 0.1)& 0.406 (0.023)& 63.43&  \textit{0.350 (0.023)}& 61.82& 0.764 (0.013)& 60.88& \textbf{0.171 (0.026)} &60.26\\
      &(5, 0.2)& 0.177 (0.008)& 53.47& \textit{0.159 (0.007)}& 50.34& 0.611 (0.014)& 50.16 & \textbf{0.098 (0.001)} &50.45\\
      &(10, 0.1)& 0.522 (0.011)& 19.34& \textit{0.493 (0.012)}& 20.54& 0.798 (0.007)& 19.12& \textbf{0.380 (0.015)} &20.81\\
      &(10, 0.2)& 0.296 (0.007)&22.53 & \textit{0.214 (0.009)}& 23.68& 0.657 (0.010)& 22.05& \textbf{0.128 (0.005)}&23.53\\
    \bottomrule
    \end{tabular}
    }
\end{table*}

\begin{table*}
    \centering
    \caption{The rank estimation while keeping the smallest error for different approaches under Scheme 2 with SNR=10.}
    \label{tab:discuss}
    \begin{tabular}{>{\centering\arraybackslash}p{0.17\columnwidth} >{\centering\arraybackslash}p{0.14\columnwidth} >{\centering\arraybackslash}p{0.20\columnwidth} >{\centering\arraybackslash}p{0.11\columnwidth} >{\centering\arraybackslash}p{0.11\columnwidth} >{\centering\arraybackslash}p{0.11\columnwidth}}
    \toprule
        $m_1=m_2$ & (r, SR) & Max-norm & Hybrid & Nuclear & TL1 \\
    \midrule
        500 & (5, 0.2) & 42 & 36 & 58 & 28 \\
    \bottomrule
    \end{tabular}
\end{table*}

\begin{table*}
    \centering
    \caption{Comparison in terms of TRMSE and estimated rank on the Coat Shopping Dataset and MovieLens 100K Dataset. We highlight the best TRMSE values using boldface and 2nd best in italics.} 
    \label{tab:realdata}
    \begin{tabular}{ccccc | cccc}
        \toprule
        \multicolumn{5}{c|}{Coat Shopping Dataset} & \multicolumn{4}{c}{MovieLens 100K Dataset} \\
        \midrule
        Method & Max-norm & Hybrid & Nuclear & $\text{TL1}$ &  Max-norm & Hybrid & Nuclear & $\text{TL1}$ \\
        \midrule
        TRMSE & 1.0635 & \textit{1.0107} & 1.1726 & \textbf{0.9975} & \textit{1.0128} & 1.0375 & 1.1586 & \textbf{1.0051} \\
        Rank & 28 & 30 & 33 & 22 & 38 & 35 & 59 & 30 \\
        \bottomrule
    \end{tabular}
\end{table*}

\paragraph{Discussion 1.}

We explore how the hyper-parameter $a$ in the TL1 regularization affects the performance of the model \eqref{est:hatA} for low-rank matrix completion.  In particular, we examine the rank estimations of different methods for a $500 \times 500$ target matrix of rank 5 under Scheme 2 with 20\% sampling rate and SNR = 10 in one simulation. We set the hyper-parameter $a=100$ for TL1 regularization, which is mostly chosen among all the simulation scenarios. 
It is evident in Table \ref{tab:discuss} that the TL1-regularized model results in the lowest rank for the estimator while keeping the smallest relative error. 

\begin{figure}[h]
\vspace{-4mm}
    \centering 
    \caption{Impact of the parameter $a$ in TL1 on the matrix recovery: relative errors (left) and estimated rank (right) with respect to $a$.} 
    \includegraphics[scale=0.4]{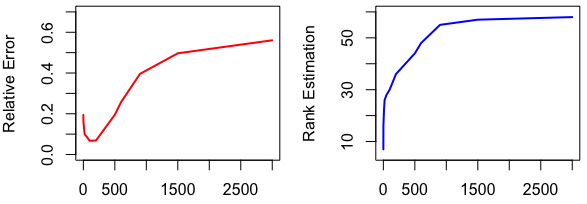}
    \setlength{\abovecaptionskip}{-5pt}  
    \setlength{\belowcaptionskip}{-2pt}  
    \label{fig:discuss}
    \vspace{-5mm}
\end{figure}

Figure \ref{fig:discuss} illustrates the changes in the relative error and
the rank of the estimator matrix with respect to the parameter $a$. For each $a,$ we select the optimal value of $\lambda$ in the model \eqref{est:hatA}. 
We observe in Figure \ref{fig:discuss} that the relative error decreases as $a$ increases from 0, reaching a minimum around $a = 100$, and then increases steadily, eventually becoming asymptotically flat. For the rank estimation, there is an increasing trend as the value of $a$ increases. The curve rises sharply at first, indicating that smaller values of $a$ lead to better rank estimation. When $a$ is relatively large, the estimated rank becomes close to that of the nuclear norm method. This empirical behavior aligns with the TL1's property in \eqref{limits}. In Table \ref{tab:discuss},
we present the rank estimated by all competing methods under the same setting, showing that the TL1 regularization produces a matrix with the lowest rank among them. Although the rank estimated by TL1 regularization is higher than the true rank (i.e., 5), this outcome is reasonable due to the trade-off between estimation error and rank.

\paragraph{Discussion 2.}
We compare the bias and variance of estimators obtained from TL1 regularization and nuclear norm regularization. The TL1-regularized model is expected to introduce less bias than nuclear norm regularization, as TL1 reduces the penalization on larger singular values, potentially leading to higher accuracy in matrix completion. In contrast, the nuclear norm-based model minimizes the sum of singular values, which generally reduces the estimator's variance but tends to introduce higher bias. To empirically validate these hypotheses, we design an experiment under Scheme 2 with SNR$=10$ over 100 random trials. The results, presented in Table \ref{tab_d2} in Appendix \ref{A5:Other_res}, show that the estimator  from the TL1-regularized model exhibits a lower bias but slightly higher variance compared to   the nuclear norm regularized model.

\section{REAL DATA APPLICATIONS}

We investigate the performance of two real-world datasets: Coat Shopping and MovieLens 100K. Please refer to Appendix \ref{A7.real-data} 
for data description.
For both datasets, we randomly divide the test set into two distinct subsets: a validation set for the purpose of tuning parameters and an evaluation set to assess the performance of the estimator. Each subset includes 50 percent of the observed entries from the original test set. 
We use the test root mean squared error (TRMSE) restricted on the evaluation set to gauge the recovery performance, as outlined in \cite{wang2021matrix}.
We report the TRMSE values and the ranks of the estimators in Table \ref{tab:realdata}, showing that TL1 outperforms the other approaches in terms of TRMSE for both datasets.
For the Coat Shopping Dataset, the hybrid-regularized method provides the second-smallest TRMSE, but it produces a higher rank than the TL1-based method. In contrast, the nuclear norm regularized model performs the worst. For the MovieLens dataset, TL1 regularization delivers the best TRMSE with the lowest rank. In summary, the TL1-regularized model demonstrates significant improvement over other methods with consistent smallest errors and relatively lower rank estimation.

\section{CONCLUSION}

This paper makes a 
contribution to the statistical analysis for controlling the rank of the estimator and the non-asymptotic upper bounds of recovery errors produced by the TL1 regularization.
For an asymptotically large parameter $a$, the error bound achieves a minimax optimal convergence rate up to a logarithmic factor for low-rank or approximately low-rank matrices, aligned with existing literature work on nuclear norm regularization. Regarding the theoretical analysis of rank estimation which has not yet been extensively studied, we establish a constant order bound for exactly low-rank matrices when $a$ is small, and we aim to refine this bound in future research. 
Overall, our work 
bridges the gap in the literature, which primarily addresses matrix completion with uniform sampling, by extending it to a more general sampling mechanism. Experimental results demonstrate that TL1 
outperforms other methods with consistently smaller errors and lower ranks regardless of whether the sampling scheme is uniform. We include an empirical study of the parameter $a$ for its impact on the matrix completion, aiming at some guidance in the practical use of TL1 regularize.

\section*{Acknowledgments}
The authors thank the reviewers for their helpful comments and suggestions. Portions of this research
were conducted with the computing resources provided by UTD High Performance Computing (HPC). The work of Jiayi Wang is partly supported by the National Science Foundation (DMS-2401272). Yifei Lou is partly supported by the NSF CAREER Award 2414705.

\bibliographystyle{plainnat}
\bibliography{references}

\onecolumn

\aistatstitle{Nosiy Low-Rank Matrix Completion via Transformed $L_1$ Regularization and its Theoretical Properties: \\
Supplementary Materials}

\appendix


\section{Numerical algorithm}
\label{A4:Algorithm}

Here we describe an efficient algorithm for solving the TL1-regularized model for matrix completion, defined in \eqref{est:hatA}. Recall that we are given 
$n$  independent pairs $(T_i, Y_i)$ with $T_i\in \Gamma$ and $Y_i\in \mathbb R$ in the trace regression model \eqref{trace_reg_mod}, which can be expressed by the Hadamard product, i.e.,
\begin{equation}
    Y = T \circ A_0 + N,
\end{equation}
where $T = \sum_{i=1}^n T_i$, $\circ$ denotes the elementwise Hadamard product, $N$ is the noise term. The resulting matrix $Y$ is of the same size as the underlying matrix $A_0\in\mathbb R^{m_1\times m_2}.$ Consequently, the optimization problem \eqref{est:hatA} can be reformulated as:
\begin{equation}\label{opt_prob}
    \begin{aligned}
        &\underset{A,Z}{\min} \frac{1}{n} \| Y - T \circ A \|_F^2 + \lambda \text{TL1}_a (Z)\\
        &\text{subject to} \; A = Z \; \text{and} \; \|A\|_\infty \leq \zeta, 
    \end{aligned}
\end{equation}
with an auxiliary variable $Z \in \mathbb{R}^{m_1 \times m_2}$ so that we can apply the alternating direction method of multipliers (ADMM) \citep{boydPCPE11admm}.
In particular, the augmented Lagrangian function of \eqref{opt_prob} can be written as
\begin{equation}
    \begin{aligned}
        \mathcal{L}(A, Z, W) = \frac{1}{n}\| Y - T \circ A \|_F^2 + \lambda \text{TL1}_a (Z) + \langle A-Z, W\rangle+\frac{\rho}{2} \|A-Z\|_F^2, 
    \end{aligned}
\end{equation}
where $W \in \mathbb{R}^{m_1 \times m_2}$ is a dual variable and $\rho > 0$ is a step size.
The ADMM scheme involves iteratively minimizing the augmented Lagrangian with respect to $A$ while keeping $Z$ and 
$W$ fixed, then minimizing with respect to $Z$ while keeping $A$ and $W$ fixed, and finally performing a gradient ascent step with respect to $W$ while keeping $A$ and $Z$ fixed. In short, ADMM iterates as follows,
\begin{align}
 \label{ADMM_A}   &A^{k+1} = \underset{\|A\|_\infty \leq \zeta}{\arg\min} \; \mathcal{L}(A, Z^k, W^k),\\
  \label{ADMM_Z}  &Z^{k+1} = \underset{Z}{\arg\min} \; \mathcal{L}(A^{k+1}, Z, W^k),\\
 \label{ADMM_V}   &W^{k+1} = W^{k} + \tau\rho(A^{k+1} - Z^{k+1}),
\end{align}
where $\tau \in \left( 0, (1+\sqrt{5})/2 \right)$ is a parameter to acceleration. We follow the work of \cite{fang2018max} to set $\tau=1.618$.

For the $A$ sub-problem \eqref{ADMM_A}, we write it as
\begin{align}\label{eq:A-sub}
    A^{k+1} = \underset{\|A\|_\infty \leq \zeta}{\arg\min} \; \frac{1}{n}\| Y - T \circ A \|_F^2 + \frac{\rho}{2} \|A-Z^k+\frac{1}{\rho}W^k\|_F^2.
\end{align}
Without the constraint $\|A\|_\infty \leq \zeta,$ the optimal solution to \eqref{eq:A-sub} can be expressed by
\begin{equation}
    A^{k+\frac 1 2} := \left(\frac 2 n T\circ Y+ \rho Z^{k}- W^{k}\right)\oslash (\frac 2 n T + \rho),
\end{equation}
where $\oslash$ denotes the elementwise division. Then we project the solution to the constraint $[-\zeta, \zeta]$, thus leading to
\begin{align}
    A^{k+1} = \min\left\{\max \{A^{k+\frac 1 2},\zeta\}, -\zeta \right\},
\end{align}
where min and max are conducted elementwise.



The $Z$ subproblem \eqref{ADMM_Z} can be formulated by
\begin{align}\label{eq:Z-sub}
    Z^{k+1} = \underset{Z}{\arg\min} \; \lambda \text{TL1}_a (Z) + \frac{\rho}{2} \|X^{k+1}-Z+\frac{1}{\rho}W^k\|_F^2,
\end{align}
which has a closed-form solution that is similar to the singular value thresholding (SVT) operator for the nuclear norm minimization \citep{cai2010singular}. 
 Specifically, we define the proximal operator \citep{zhang2015transformed} for the TL1 regularization applied on a scalar $x$ to be
 \begin{align*}
     \text{prox}_{\text{TL1}_a}(x, \mu) := & \arg\min_{z\in\mathbb R}\left\{\mu\frac{(a+1)z}{a+z} + \frac 1 2 (z-x)^2\right\}\\
    =& \mathrm{sign}(x) \left\{ \frac{2}{3}(a+|x|)\cos(\frac{\phi(x)}{3}) - \frac{2a}{3}+\frac{|x|}{3} \right\},
\end{align*}
with $\phi(x) = \arccos(1- \frac{27\mu a(1+a)}{2(a+|x|)^3}).$  Then, the closed-form solution for the $Z$-update \eqref{eq:Z-sub} is given by
\begin{align}
    Z^{k+1} = U\mbox{diag}\left(\{\text{prox}_{\text{TL1}_a}(\sigma_k, \lambda/\rho)\}_{1\leq k\leq m} \right)V^\intercal,
\end{align}
 where we have  the singular value decomposition (SVD) of the matrix $A^{k+1} + \frac{1}{\rho}W^k = U\Sigma V^\intercal$ and the diagonal matrix $\Sigma$ has elements $\sigma_k$ for $1\leq k\leq m.$
 





We summarize the overall algorithm in Algorithm \ref{alg:TL1_ADMM}.

\begin{algorithm}
    \caption{TL1-regularized matrix completion \eqref{est:hatA} via ADMM}
    \label{alg:TL1_ADMM}
    \begin{algorithmic}
        \STATE Input: $Y$, $T$
        \STATE Set parameters: $\lambda$, $a$, $\zeta$, $\rho$, $\tau = 1.618$
        \STATE Initialize $Z^0=Y$, $V^0=0$ and $k=0.$
        \WHILE{stopping criterion is not satisfied}
            \STATE $A^{k+1} \gets \min\left\{\max \{\left(\frac 2 n T\circ Y+ \rho Z^{k}- W^{k}\right)\oslash (\frac 2 n T + \rho),\zeta\}, -\zeta \right\}$
            \STATE $Z^{k+1} \gets U\mbox{diag}\left(\{\text{prox}_{\text{TL1}_a}(\sigma_k, \lambda/\rho)\}_{k} \right)V^\intercal,$ where $A^{k+1} + \frac{1}{\rho}W^k = U\text{diag}(\{\sigma_k\}_{k}) V^\intercal$
            \STATE $W^{k+1} \gets W^k + \tau \rho (A^{k+1} - Z^{k+1})$
            \STATE $k \gets k + 1$
        \ENDWHILE
    \end{algorithmic}
\end{algorithm}

\section{Proof of Theorem \ref{Them1} and Theorem \ref{Them3}}
\label{A1:Theorem1,3}

Denote a constraint set:
\begin{align}\label{constrain_set}
    &\mathcal{K}(\zeta, \gamma):= \bigg\{ A\in \mathbb{R}^{m_1 \times m_2}: \|A\|_{\infty} \leq \zeta, \frac{\|A\|_*}{\sqrt{m_1m_2}} \leq \gamma \bigg\},
\end{align}
and a constant
\begin{align}
    Z_{\gamma}  = \underset{A \in \mathcal{K}(\zeta, \gamma)}{\sup} \bigg|\frac{1}{n} \sum_{i=1}^{n} \langle T_i, A \rangle^2 - \|A\|^2_{L_2(\Pi)} \bigg|.
\end{align}
Given an i.i.d. Rademacher sequence $\{\epsilon_i\}_{i=1} ^n$, we define,
 \begin{equation}\label{eq:SigmaSets}
     \Sigma_R =  \frac{1}{n} \sum_{i=1}^{n} \epsilon_i T_i \; \text{and} \; \Sigma =  \frac{\sigma}{n} \sum_{i=1}^{n} \xi_i T_i.
 \end{equation}
We introduce Lemmas \ref{lemma1}-\ref{lemma3} to find a non-asymptotic upper bound on the Frobenius norm error. 

\begin{lemma}\label{lemma1}
    Suppose that $T_i$ are i.i.d. indicator matrices with distribution $\Pi$ on $\Gamma$ for $i = 1,\dots, n,$ and Assumptions 1 and 2 hold. For any $n\geq m \frac{(\log d)^3}{L}$, there exists a constant $C^*$ only depending on $c_0$ such that  
    \begin{align*}
        \mathbb{E}(Z_{\gamma}) \leq C^* \gamma \sqrt{\frac{LM\log d}{n}},
    \end{align*}
    with a probability at least $1-\frac{1}{d}$.
\end{lemma}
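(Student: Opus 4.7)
The plan is to bound $Z_\gamma$ using standard empirical-process machinery: symmetrization, contraction to remove the square, duality to pass to the spectral norm of a Rademacher sum, matrix Bernstein to control that spectral norm, and finally a Talagrand-type concentration inequality to upgrade the expected bound to a high-probability statement.

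First I would symmetrize to get
$$\mathbb{E}(Z_\gamma) \;\leq\; 2\,\mathbb{E}\sup_{A\in\mathcal{K}(\zeta,\gamma)} \Bigl|\frac{1}{n}\sum_{i=1}^n \epsilon_i \langle T_i, A\rangle^2\Bigr|,$$
where $(\epsilon_i)$ are independent Rademacher variables. Because each $T_i$ is a canonical basis indicator, $\langle T_i,A\rangle$ is a single entry of $A$ and thus bounded in absolute value by $\zeta$ on the constraint set. The scalar map $x\mapsto x^2$ is $2\zeta$-Lipschitz on $[-\zeta,\zeta]$, so the Ledoux--Talagrand contraction principle, followed by nuclear/spectral duality and the nuclear-norm constraint in $\mathcal{K}(\zeta,\gamma)$, yields
$$\mathbb{E}(Z_\gamma) \;\lesssim\; \zeta\,\mathbb{E}\sup_{A\in\mathcal{K}(\zeta,\gamma)}|\langle \Sigma_R,A\rangle| \;\leq\; \zeta\gamma\sqrt{m_1 m_2}\,\mathbb{E}\|\Sigma_R\|,$$
with $\Sigma_R$ as in \eqref{eq:SigmaSets}.

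Next I would control $\mathbb{E}\|\Sigma_R\|$ via the matrix Bernstein inequality applied to the sum of centered, bounded matrices $\epsilon_i T_i / n$. The operator-norm bound is $1/n$, and since $\mathbb{E}[T_i T_i^\top] = \mathrm{diag}(R_1,\dots,R_{m_1})$ and $\mathbb{E}[T_i^\top T_i]=\mathrm{diag}(C_1,\dots,C_{m_2})$, Assumption \ref{assump1} gives the variance proxy $\max(\|\mathbb{E}T_i T_i^\top\|, \|\mathbb{E}T_i^\top T_i\|) \leq L/m$. Matrix Bernstein then produces
$$\mathbb{E}\|\Sigma_R\| \;\lesssim\; \sqrt{\frac{L\log d}{nm}} + \frac{\log d}{n},$$
and the sample-size assumption $n \geq m(\log d)^3/L$ makes the sub-Gaussian term dominate. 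Using $m_1 m_2 = mM$, combining the inequalities gives $\mathbb{E}(Z_\gamma) \lesssim \zeta\gamma\sqrt{LM\log d/n}$, with $\zeta$ absorbed into the constant $C^*$.

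To upgrade to the high-probability statement, I would invoke Bousquet's form of Talagrand's concentration inequality for suprema of empirical processes on the class $\{A\mapsto \langle T_i, A\rangle^2 - \|A\|_{L_2(\Pi)}^2 : A\in\mathcal{K}(\zeta,\gamma)\}$. Each element is uniformly bounded by $\zeta^2$ and has weak variance bounded by $\zeta^2\|A\|_{L_2(\Pi)}^2 \lesssim \zeta^4$; choosing the deviation scale of order $\sqrt{\log d / n}$ and using the expectation bound already established absorbs the deviation term and produces the claim with probability at least $1-1/d$. The main obstacle is the matrix Bernstein step: Assumption \ref{assump1} must be used precisely to extract the $L/m$ factor in the variance proxy, and the sample-size condition $n \geq m(\log d)^3/L$ is exactly what is needed to suppress the linear-in-$1/n$ Bernstein term relative to the $\sqrt{L\log d/(nm)}$ term. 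A secondary subtlety is justifying that contraction does not introduce dimension factors, which relies crucially on the sup-norm constraint $\|A\|_\infty\leq\zeta$.
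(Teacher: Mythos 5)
Your proposal is correct and follows essentially the same route as the paper: symmetrization, Ledoux--Talagrand contraction (valid because the sup-norm constraint makes $x\mapsto x^2$ Lipschitz on the relevant range), nuclear/spectral duality, and a bound on $\mathbb{E}\|\Sigma_R\|$ of order $\sqrt{L\log d/(nm)}$ --- which the paper obtains by citing Lemma~6 of \citet{klopp} and you re-derive via matrix Bernstein with the variance proxy $L/m$ from Assumption~\ref{assump1}. Your closing Talagrand concentration step is extra work that the paper defers to Lemma~\ref{lemma2}, but it does no harm.
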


\begin{proof}[Proof of Lemma \ref{lemma1}]
    Without loss of generality, we take $\zeta = 1$ in the proposed model \eqref{est:hatA}. Consequently, $\|A\|_{\infty} \leq 1$ indicates $|\langle T_i, A \rangle| \leq 1, \; i = 1, \dots ,n$.   
It follows from the symmetrization inequality from the book \citep[Theorem 2.1]{koltchinskii2011oracle} that
\begin{align*}
    \mathbb{E}(Z_{\gamma}) = \mathbb{E}\bigg( \underset{A \in \mathcal{K}(1, \gamma)}{\sup} \bigg|\frac{1}{n} \sum_{i=1}^{n} \langle T_i, A \rangle^2 - \|A\|^2_{L_2(\pi)} \bigg|\bigg)\leq 2\mathbb{E} \bigg( \underset{A \in \mathcal{K}(1, \gamma)}{\sup} \bigg|\frac{1}{n} \sum_{i=1}^{n} \epsilon_i \langle T_i, A \rangle^2 \bigg| \bigg),
\end{align*}
where $\{\epsilon_i\}_{i=1}^{n}$ are independent Rademacher random variables. We further use the contraction inequality in \citet[Theorem 2.3]{koltchinskii2011oracle} to obtain that
\begin{align*}
    \mathbb{E}(Z_{\gamma}) &\leq 8\mathbb{E} \bigg( \underset{A \in \mathcal{K}(1, \gamma)}{\sup} \bigg|\frac{1}{n} \sum_{i=1}^{n} \epsilon_i \langle T_i, A \rangle \bigg| \bigg) = 8\mathbb{E} \bigg( \underset{A \in \mathcal{K}(1, \gamma)}{\sup}|\langle \Sigma_R, A \rangle| \bigg)\\
    &\leq 8\mathbb{E} \bigg( \underset{A \in \mathcal{K}(1, \gamma)}{\sup} \|\Sigma_R\|\; \|A\|_* \bigg), \quad \text{by the duality between nuclear and operator norms}\\
    &\leq 8\mathbb{E} (\|\Sigma_R\|) \; \gamma\sqrt{m_1m_2}\\
    &\leq 8C^* \sqrt{\frac{L\log d}{mn}} \gamma\sqrt{m_1m_2}, \quad \text{by \citet[Lemma 6]{klopp}}\\
    &\leq C^*\gamma \sqrt{\frac{m_1m_2}{m}} \sqrt{\frac{L\log d}{n}} = C^*\gamma \sqrt{\frac{LM\log d}{n}},
\end{align*}
holds with a probability at least $1-\frac{1}{d}$.
\end{proof}

\begin{lemma}\label{lemma2}
    Suppose that $T_i$ are i.i.d. indicator matrices with distribution $\Pi$ on $\Gamma$ for $i = 1,\dots, n,$ and Assumptions 1 and 2 hold. Then for any $A \in \mathcal{K}(\zeta, \gamma)$, the inequality
    \begin{equation*}
        \frac{1}{n} \sum_{i=1}^{n} \langle T_i, A \rangle^2 \geq \|A\|^2_{L_2(\Pi)} - \zeta^2 \sqrt{\frac{L\log d}{n}} - \zeta \frac{\|A\|_*}{\sqrt{m_1m_2}} \sqrt{\frac{LM\log d}{n}}
    \end{equation*}
holds with probability at least $1- \frac{\kappa}{d}$, where $\kappa$ is a constant depending on a universal constants $K$ and $L$ defined in Assumption 1.
\end{lemma}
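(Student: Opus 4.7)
The plan is to first rescale to $\zeta = 1$, since the inequality is homogeneous of degree two in $A$ (both $\langle T_i, A\rangle^2$ and $\|A\|_{L_2(\Pi)}^2$ scale like $A^2$ while $\|A\|_*/\sqrt{m_1m_2}$ scales like $A$). After this reduction, the problem becomes one of controlling, uniformly over $\{A : \|A\|_\infty \leq 1\}$, the deviation $Z_A := |\frac{1}{n}\sum_i \langle T_i, A\rangle^2 - \|A\|_{L_2(\Pi)}^2|$ by a quantity that depends \emph{linearly} on $\|A\|_*/\sqrt{m_1 m_2}$ plus a constant-in-$A$ term. This is exactly the structure produced by a peeling (dyadic slicing) argument over nuclear-norm level sets, combined with the fixed-radius expectation bound from Lemma \ref{lemma1}.

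For each fixed $\gamma > 0$, I would apply Bousquet's version of Talagrand's concentration inequality to $Z_\gamma$. Each summand $\langle T_i, A\rangle^2$ is bounded by $1$ (since $T_i$ has a single nonzero entry equal to one and $\|A\|_\infty \leq 1$), and its variance is bounded by $\mathbb{E}\langle T_i, A\rangle^4 \leq \|A\|_{L_2(\Pi)}^2 \leq \nu/m$ using Assumption \ref{assump2}. Combining Bousquet's inequality with the expectation bound from Lemma \ref{lemma1} then yields, for every $t>0$ and with probability at least $1-e^{-t}$,
\[
    Z_\gamma \;\leq\; C_1\, \gamma \sqrt{\frac{LM\log d}{n}} \;+\; C_2\sqrt{\frac{\nu t}{m n}} \;+\; \frac{C_3\, t}{n}
\]
for universal constants $C_1, C_2, C_3$.

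To upgrade this fixed-radius estimate to a bound valid uniformly over $\{A:\|A\|_\infty \leq 1\}$, I would apply a dyadic peeling argument. Set a base scale $\gamma_\ast \asymp \sqrt{LM\log d/n}$ and shells $\mathcal{S}_j = \{A : \gamma_\ast 2^{j-1} < \|A\|_*/\sqrt{m_1 m_2} \leq \gamma_\ast 2^j\}$ for $j = 1, \ldots, J$ together with a base set $\{A : \|A\|_*/\sqrt{m_1 m_2} \leq \gamma_\ast\}$. Since $\|A\|_*/\sqrt{m_1 m_2} \leq \sqrt{m}$ when $\|A\|_\infty \leq 1$, one has $J = O(\log d)$. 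On each shell, apply the fixed-radius bound with $\gamma = \gamma_\ast 2^j$ and $t = C'\log d$, and use $\gamma_\ast 2^j \leq 2\|A\|_*/\sqrt{m_1 m_2}$ to convert the bound into the desired linear-in-$\|A\|_*$ form. A union bound over the $J$ shells contributes only a constant factor $\kappa$ in the failure probability, and undoing the rescaling to general $\zeta$ by multiplying through by $\zeta^2$ (and noting that the linear term carries one $\zeta$ after substituting $A/\zeta$) yields the claimed inequality.

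The main obstacle is careful book-keeping in the peeling step: one must choose the base scale $\gamma_\ast$ so that both the Bousquet variance term $\sqrt{\nu t/(mn)}$ and the Bernstein term $t/n$ can be absorbed into the target $\sqrt{L\log d/n}$ deviation (which uses $L\geq 1$, Assumption \ref{assump2}, and $M\geq m$), while keeping the number of peeling levels logarithmic so that the union bound does not blow up the probability. Once these constants are aligned, the argument follows the template used for uniform restricted strong convexity bounds in the nuclear-norm matrix completion literature \citep{klopp,negahban2012restricted}.
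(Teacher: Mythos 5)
Your overall route is the same as the paper's: reduce to $\zeta=1$ by homogeneity, control $\EE(Z_\gamma)$ at a fixed nuclear-norm radius via Lemma \ref{lemma1}, concentrate $Z_\gamma$ around its mean with Talagrand's inequality, and then peel over dyadic nuclear-norm shells to get a bound that is linear in $\|A\|_*/\sqrt{m_1m_2}$ plus a radius-free term. The only structural difference --- truncating the peeling at $J=\bigO(\log d)$ shells and taking a union bound with $t=C'\log d$ per shell, versus the paper's summation of an infinite geometric series over all shells --- is cosmetic and either version closes.

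There is, however, one concrete problem with the parameter you flag as the delicate one. With your base scale $\gamma_\ast \asymp \sqrt{LM\log d/n}$, the contribution of the expectation term on the base set $\{A:\|A\|_*/\sqrt{m_1m_2}\le\gamma_\ast\}$ is $C_1\gamma_\ast\sqrt{LM\log d/n}\asymp LM\log d/n$, and on the base set you cannot convert this into the linear-in-$\|A\|_*$ term, so it must be absorbed into the constant term $\sqrt{L\log d/n}$. That absorption requires $LM\log d/n\lesssim\sqrt{L\log d/n}$, i.e., $n\gtrsim LM^2\log d$, which is far stronger than the paper's condition $n\gtrsim d\log d$ and is typically false since $n\le m_1m_2\le M^2$. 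The correct base scale is $\gamma_\ast=1/\sqrt{M}$ (the paper's choice), for which $\gamma_\ast\sqrt{LM\log d/n}=\sqrt{L\log d/n}$ exactly matches the target constant term; the shells above $1/\sqrt{M}$ are then handled as you describe. A second, harmless slip: the per-summand variance bound should be $\EE\langle T_i,A\rangle^4\le\|A\|^2_{L_2(\Pi)}\le\nu$, not $\nu/m$ (consider the all-ones matrix); since $\sqrt{\nu\log d/n}\lesssim\sqrt{L\log d/n}$ anyway, this does not affect the conclusion.
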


\begin{proof}[Proof of Lemma \ref{lemma2}]
    Let $X_i = \langle T_i, A \rangle^2$ and define
\begin{align}\label{eq:V}
    V := n\mathbb{E}(X_i^2) + 16n\gamma \sqrt{M} \sqrt{\frac{L\log d}{n}} \leq n + 16\gamma \sqrt{LMn\log d}\; \lesssim n\sqrt{L\log d}.
\end{align}
By the Talagrand concentration inequality in \citet[Theorem 2.6]{koltchinskii2011oracle} and a logarithmic property that $\log (1+x) \geq \frac{x}{1+x}, \forall x\geq 0$, we take $t \gtrsim \sqrt{\frac{L \log d}{n}}$, thus getting
\begin{equation}\label{talagrand_inq}
\begin{aligned}
    \mathbb{P}\bigg\{ \underset{A \in \mathcal{K}(1, \gamma)}{\sup}|Z_\gamma - \mathbb{E}(Z_\gamma)| \geq t \bigg\} &\leq K \exp \bigg\{ -\frac{1}{K} tn \; \log \bigg(1+\frac{tn}{n\sqrt{L\log d}}\bigg) \bigg\}\\
    &\leq K \exp \bigg\{ -\frac{1}{K} \frac{t^2 n}{\sqrt{L\log d}+t} \bigg\},
\end{aligned}
\end{equation}
where $K$ is a universal constant, $V$ is defined in \eqref{eq:V}, and $U$ is the uniform bound of $X_i$. In our case, $U = 1$. 

We split the proof into two cases depending on the value of $\gamma$.

Case 1: If $\gamma \leq \frac{1}{\sqrt{M}}$, then $\eqref{talagrand_inq}$ implies for $t'\gtrsim 1$
\begin{equation}\label{case1_inq}
\begin{aligned}
     \mathbb{P}\bigg\{\underset{A \in \mathcal{K}(1, \gamma)}{\sup} Z_{\gamma} \geq t'\sqrt{\frac{L\log d}{n}} \bigg\} &\leq K \exp \bigg\{ -\frac{1}{K} \frac{t'^2 L\log d}{\sqrt{L\log d} + t'\sqrt{\frac{L\log d}{n}}} \bigg\}\\
     &= K \exp  \bigg\{ -\frac{1}{K} \frac{t'^2 \sqrt{L\log d}}{1 + \frac{t'}{\sqrt{n}}} \bigg\}
     \leq K \exp \bigg\{ -\frac{1}{K} \frac{\sqrt{L\log d}}{1+\frac{t'}{\sqrt{n}}} \bigg\}.
\end{aligned}
\end{equation}

Case 2: If $\gamma \geq \frac{1}{\sqrt{M}}$, then, similarly, we have for $t'\gtrsim 1$,
\begin{equation}\label{case2_inq}
    \begin{aligned}
        \mathbb{P}\bigg\{\underset{A \in \mathcal{K}(1, \gamma)}{\sup} Z_{\gamma} \geq t' \gamma \sqrt{\frac{LM\log d}{n}} \bigg\} &\leq K \exp \bigg\{ -\frac{1}{K} \frac{t'^2 \gamma ^2 LM\log d}{\sqrt{L\log d}+t'\gamma\sqrt{\frac{LM\log d}{n}}} \bigg\}\\
        &= K \exp \bigg\{ -\frac{1}{K} \frac{t'^2 \gamma ^2 M \sqrt{L\log d}}{1+\gamma \sqrt{M} \frac{t'}{\sqrt{n}}} \bigg\}\\
        &\leq  K \exp \bigg\{ -\frac{1}{K} \frac{ \sqrt{L\log d}}{1+\gamma \sqrt{M} \frac{t'}{\sqrt{n}}} \bigg\}.
    \end{aligned}
\end{equation}

Next, we use the standard peeling argument to estimate the probability. Specifically, we define a sequence of sets 
\begin{align*}
    S_l = \bigg\{ A \in \mathcal{K}(1, \gamma)  : 2^{l-1} \frac{1}{\sqrt{M}} \leq \frac{\|A\|_*}{\sqrt{m_1m_2}} \leq 2^{l}\frac{1}{\sqrt{M}} \bigg\}, \quad l = 1, 2, \cdots
\end{align*}
Then a series of estimations lead to
\begin{equation}
    \begin{aligned}
        &\mathbb{P} \bigg\{\underset{\|A\|_\infty \leq 1}{\sup} \frac{Z_{\gamma}}{\|A\|_*/\sqrt{m_1m_2}} \geq t' \sqrt{\frac{LM\log d}{n}} \bigg\}\\
        &\leq \sum_{l=1}^{\infty} \mathbb{P} \bigg\{\underset{A \in S_l}{\sup} \frac{Z_{\gamma}}{\|A\|_*/\sqrt{m_1m_2}} \geq t' \sqrt{\frac{LM\log d}{n}} \bigg\}\\
        &\leq \sum_{l=1}^{\infty} \mathbb{P} \bigg\{\underset{A \in \mathcal{K} \big(1, 2^{l}\frac{1}{\sqrt{M}} \big)}{\sup} \frac{Z_{\gamma}}{\|A\|_*/\sqrt{m_1m_2}} \geq t' \sqrt{\frac{LM\log d}{n}} \bigg\}\\
        &\leq \sum_{l=1}^{\infty} \mathbb{P} \bigg\{\underset{A \in \mathcal{K} \big(1, 2^{l}\frac{1}{\sqrt{M}} \big)}{\sup} Z_{\gamma} \geq t' 2^{l-1} \sqrt{\frac{L\log d}{n}} \bigg\}\\
        &\leq \sum_{l=1}^{\infty} K \exp  \bigg\{ -\frac{1}{K} \frac{t'^2 \; 2^{2l-2}\sqrt{L\log d}}{1+\frac{t'}{\sqrt{n}}2^{l-1}} \bigg\}\\
        &\leq \sum_{l=1}^{\infty} K \exp  \bigg\{  -\frac{1}{K} \frac{2^{l-1} \sqrt{L\log d}}{1+ \frac{t'}{\sqrt{n}}} \bigg\}, \quad \text{ because for any $l \geq 1$, $2^{l-1} \geq 1$}\\
        &\leq \sum_{l=1}^{\infty} K \exp  \bigg\{  -\frac{\log 2}{2K} \frac{\sqrt{L\log d}}{1+ \frac{t'}{\sqrt{n}}} \; l \bigg\}, \quad \text{because for any $x>0$, $x > \log x$ always holds}\\
        &= K\frac{\exp  \bigg\{  -\frac{\log 2}{2K} \frac{\sqrt{L\log d}}{1+ \frac{t'}{\sqrt{n}}}\bigg\}}{1-\exp  \bigg\{  -\frac{\log 2}{2K} \frac{\sqrt{L\log d}}{1+ \frac{t'}{\sqrt{n}}}\bigg\}}.
    \end{aligned}
\end{equation}
Based on Case 1 and Case 2, we set $t' = 1$ for large $n$, then for any matrix $A \in \mathbb{R}^{m_1\times m_2}$ with $\|A\|_\infty \leq 1$, the following inequality
\begin{equation*}
    \begin{aligned}
        Z_{\gamma} \leq \sqrt{\frac{L\log d}{n}} + \frac{\|A\|_*}{\sqrt{m_1m_2}} \sqrt{\frac{LM\log d}{n}},
    \end{aligned}
\end{equation*}
holds with at least probability $1- \frac{\kappa}{d}$ where $\kappa$ is a constant depending on $L$.
Therefore, we have
\begin{equation*}
    \begin{aligned}
        Z_{\gamma} \leq \zeta^2 \sqrt{\frac{L\log d}{n}} + \zeta \frac{\|A\|_*}{\sqrt{m_1m_2}} \sqrt{\frac{LM\log d}{n}},
    \end{aligned}
\end{equation*} 
which implies that 
\begin{equation*}
        \bigg|\frac{1}{n} \sum_{i=1}^{n} \langle T_i, A \rangle^2 - \|A\|^2_{L_2(\pi)} \bigg| \leq \zeta^2 \sqrt{\frac{L\log d}{n}} + \zeta \frac{\|A\|_*}{\sqrt{m_1m_2}} \sqrt{\frac{LM\log d}{n}}.
\end{equation*}
A simple calculation leads to the desired inequality
\[
\frac{1}{n} \sum_{i=1}^{n} \langle T_i, A \rangle^2 \geq \|A\|^2_{L_2(\pi)} - \zeta^2 \sqrt{\frac{L\log d}{n}} - \zeta \frac{\|A\|_*}{\sqrt{m_1m_2}} \sqrt{\frac{LM\log d}{n}},
\]
which holds with probability at least $1 - \frac{\kappa}{d}$.
\end{proof}

Lemma \ref{lemma3} is important, as it gives us an upper bound on the Frobenius norm error for the estimator $\hat{A}$ under any general sampling distribution. 

\begin{lemma}\label{lemma3}
    Suppose $T_i$ are i.i.d. indicator matrices with distribution $\Pi$ on $\Gamma$ that satisfies Assumptions 1 and 2 for $i = 1,\dots, n$. Assume $\|A_0\|_\infty \leq \zeta$ for a constant $\zeta$ and Assumption 3 holds. Take $\lambda^{-1} = \mathcal{O} \left( \left( \frac{(\zeta \vee \sigma)}{\sqrt{m_1m_2}} \frac{a + \zeta\sqrt{m_1m_2}}{1+a} \sqrt{\frac{Ld\log d}{n}} \right)^{-1} \right)$,  then for any $n \gtrsim d\log d$, there exist two constants $C_1'$ and $C_2'$ only depending on $c_0$ such that the estimator $\hat A$ from \eqref{est:hatA} satisfies
    \begin{equation}
        \begin{aligned}
            \frac{1}{m_1m_2}\|\hat{A} - A_0\|_F^2 \leq C_1' \nu \left\{ (\zeta \vee \sigma) \sqrt{\frac{Ld\log d}{n}}  \frac{\|A_0\|_{*}}{\sqrt{m_1m_2}} + \lambda \mathrm{TL}1_a(A_0) \right\} + C_2' \nu \zeta^2 \sqrt{\frac{L\log d}{n}},
        \end{aligned}
    \end{equation}
    with probability at least $1- \frac{\kappa+1}{d}$, where $\kappa$ is a constant depending on $L$. 
\end{lemma}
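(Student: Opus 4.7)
The plan is to work from the optimality condition for $\hat A$ in \eqref{est:hatA} and reduce the problem, via the concentration tools in Lemmas \ref{lemma1}-\ref{lemma2} and a high-probability spectral-norm bound on $\Sigma$ defined in \eqref{eq:SigmaSets}, to an elementary one-sided sandwich between $\text{TL1}_a$ and the nuclear norm. Since $\|A_0\|_\infty\leq\zeta$, $A_0$ is feasible in \eqref{est:hatA}, and using $Y_i=\langle T_i,A_0\rangle+\sigma\xi_i$ the minimality of $\hat A$ gives the basic inequality
\begin{equation*}
\frac{1}{n}\sum_{i=1}^n \langle T_i,\hat A-A_0\rangle^2 + \lambda\,\text{TL1}_a(\hat A) \;\leq\; 2\langle\Sigma,\hat A-A_0\rangle + \lambda\,\text{TL1}_a(A_0).
\end{equation*}
By spectral/nuclear duality, $2\langle\Sigma,\hat A-A_0\rangle\leq 2\|\Sigma\|\,\|\hat A-A_0\|_*$, and under Assumptions \ref{assump1}-\ref{assump3} a matrix Bernstein argument in the spirit of \cite{klopp} yields, on an event of probability at least $1-1/d$,
\begin{equation*}
\|\Sigma\|\;\leq\; C(\zeta\vee\sigma)\sqrt{\frac{L\log d}{mn}}\;\asymp\; \frac{C(\zeta\vee\sigma)}{\sqrt{m_1m_2}}\sqrt{\frac{Ld\log d}{n}},
\end{equation*}
using $m_1m_2/m\asymp M\asymp d$.

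The next step is to convert every nuclear-norm appearance of $\hat A$ into $\text{TL1}_a(\hat A)$. The elementary one-sided bound
\begin{equation*}
\|B\|_* \;\leq\; \frac{a+\zeta\sqrt{m_1m_2}}{a+1}\,\text{TL1}_a(B)\qquad \text{whenever }\|B\|_\infty\leq\zeta
\end{equation*}
follows from the concavity of $x\mapsto(a+1)x/(a+x)$ and from the fact that each $\sigma_j(B)$ lies in $[0,\zeta\sqrt{m_1m_2}]$. Combined with $\|\hat A-A_0\|_*\leq \|\hat A\|_*+\|A_0\|_*$, this produces a term $2\|\Sigma\|\frac{a+\zeta\sqrt{m_1m_2}}{a+1}\text{TL1}_a(\hat A)$, which I would absorb into $\frac{1}{2}\lambda\,\text{TL1}_a(\hat A)$ on the left-hand side: the hypothesis on $\lambda$ is calibrated precisely so that $\lambda\gtrsim \|\Sigma\|\cdot\frac{a+\zeta\sqrt{m_1m_2}}{a+1}$ up to constants. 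After this absorption the right-hand side becomes $C'(\zeta\vee\sigma)\sqrt{Ld\log d/n}\cdot\|A_0\|_*/\sqrt{m_1m_2}+\lambda\,\text{TL1}_a(A_0)$.

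For the left-hand empirical quadratic, I would invoke Lemma \ref{lemma2} with $2\zeta$ in place of $\zeta$ (legal since $\|\hat A-A_0\|_\infty\leq 2\zeta$) to obtain, on an event of probability $1-\kappa/d$,
\begin{equation*}
\frac{1}{n}\sum_{i=1}^n \langle T_i,\hat A-A_0\rangle^2 \;\geq\; \|\hat A-A_0\|_{L_2(\Pi)}^2 - C\zeta^2\sqrt{\frac{L\log d}{n}} - C\zeta\frac{\|\hat A-A_0\|_*}{\sqrt{m_1m_2}}\sqrt{\frac{LM\log d}{n}}.
\end{equation*}
The residual $\|\hat A-A_0\|_*/\sqrt{m_1m_2}$ is handled exactly as above: split by triangle inequality, the $\hat A$ piece is re-absorbed once more into $\lambda\,\text{TL1}_a(\hat A)$, and the $\|A_0\|_*$ piece contributes another copy of the target rate. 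Finally, Assumption \ref{assump2} gives $\|\hat A-A_0\|_{L_2(\Pi)}^2\geq \|\hat A-A_0\|_F^2/(\nu m_1m_2)$, and rearranging on the intersection of the two events (union bound yields probability $1-(\kappa+1)/d$) delivers the claimed inequality.

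The main obstacle is precisely the non-convexity of $\text{TL1}_a$: the triangle inequality is unavailable, so the standard cone/peeling decomposition used for nuclear-norm regularization cannot be imported directly. The workaround is the two-sided sandwich $\frac{a+1}{a+\zeta\sqrt{m_1m_2}}\|B\|_*\leq \text{TL1}_a(B)\leq \frac{a+1}{a}\|B\|_*$ for $\|B\|_\infty\leq\zeta$, which, together with the carefully calibrated lower bound on $\lambda$, makes every $\text{TL1}_a(\hat A)$ contribution on the right-hand side re-absorbable. Keeping track of the multiplicative constants across the absorption steps and of the $(\zeta\vee\sigma)$ scaling in the spectral-norm bound on $\Sigma$ is where the bookkeeping is most delicate.
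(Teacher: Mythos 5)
Your proposal is correct and follows essentially the same route as the paper's proof: the basic optimality inequality, nuclear/spectral duality with the matrix Bernstein bound on $\|\Sigma\|$, Lemma \ref{lemma2} for the empirical quadratic, and the key one-sided bound $\|\hat A\|_* \leq \frac{a+\zeta\sqrt{m_1m_2}}{1+a}\,\mathrm{TL1}_a(\hat A)$ (equivalently $\mathrm{TL1}_a(\hat A)\geq \frac{1+a}{a+\sigma_1(\hat A)}\|\hat A\|_*$ with $\sigma_1(\hat A)\leq\zeta\sqrt{m_1m_2}$) combined with the calibration of $\lambda$ to kill every $\hat A$-dependent nuclear-norm term. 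The only cosmetic difference is that you phrase this last step as absorbing into $\tfrac12\lambda\,\mathrm{TL1}_a(\hat A)$ on the left, while the paper shows the coefficient multiplying $\|\hat A\|_*$ on the right is non-positive; these are the same argument.
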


\begin{proof}[Proof of Lemma \ref{lemma3}]
    It follows from the optimality of the estimator $\hat{A}$ in \eqref{est:hatA} that
    \begin{align*}
        \frac{1}{n} \sum_{i=1}^{n} (Y_i - \langle T_i, \hat{A} \rangle)^2 + \lambda \text{TL1}_a (\hat{A}) \leq \frac{1}{n} \sum_{i=1}^{n} (Y_i - \langle T_i, A_0 \rangle)^2 + \lambda \text{TL1}_a (A_0).
    \end{align*}
    Replacing $Y_i$ with the trace regression model, we obtain
    \begin{align*}
        \frac{1}{n} \sum_{i=1}^{n} (\langle T_i, A_0 \rangle + \sigma \xi_i -\langle T_i, \hat{A} \rangle)^2 \leq \frac{\sigma^2}{n} \sum_{i=1}^{n} \xi_i^2 + \lambda \text{TL1}_a (A_0) - \lambda \text{TL1}_a (\hat{A}),
        \end{align*}
    which, after expanding the square, is equivalent to, ,    
        \begin{align*}
        \frac{1}{n} \sum_{i=1}^{n} \langle T_i, \hat{A} - A_0  \rangle^2 \leq \frac{2\sigma}{n}\sum_{i=1}^{n} \xi_i \langle T_i, \hat{A} - A_0\rangle +\lambda \text{TL1}_a (A_0) - \lambda \text{TL1}_a (\hat{A}).
    \end{align*}
    
    By the definition of $\Sigma$ in \eqref{eq:SigmaSets}, we have
    \begin{align} \label{inq:base}
        \frac{1}{n} \sum_{i=1}^{n} \langle T_i, \hat{A} - A_0  \rangle^2 \leq 2\langle \Sigma, \hat{A} - A_0 \rangle +\lambda \text{TL1}_a (A_0) - \lambda \text{TL1}_a (\hat{A}).
    \end{align}
    We further use the duality between the nuclear norm and the operator norm to get
    \begin{align*}
        \frac{1}{n} \sum_{i=1}^{n} \langle T_i, \hat{A} - A_0  \rangle^2 \leq 2\|\Sigma\|\|\hat{A} - A_0\|_* +\lambda \text{TL1}_a (A_0) - \lambda \text{TL1}_a (\hat{A}),
    \end{align*}
    which indicates that
    \begin{align} \label{inq:base_derived}
        \lambda \text{TL1}_a (\hat{A}) \leq 2\|\Sigma\|\|\hat{A} - A_0\|_* +\lambda \text{TL1}_a (A_0).
    \end{align}
It follows from Lemma \ref{lemma2} that
    \begin{equation}
    \begin{aligned}
        &\frac{1}{\nu m_1m_2}\|\hat{A} - A_0\|_F^2 \leq \|\hat{A} - A_0\|^2_{L_2(\Pi)} \\
        &\lesssim \frac{1}{n}\sum_{i=1}^{n} \langle T_i, \hat{A} -A_0 \rangle^2 + \zeta^2 \sqrt{\frac{L\log d}{n}} + \zeta \frac{\|\hat{A} - A_0\|_*}{\sqrt{m_1m_2}} \sqrt{\frac{LM\log d}{n}}\\
        &\lesssim 2\|\Sigma\|\|\hat{A} - A_0\|_* +\lambda \text{TL1}_a (A_0) - \lambda \text{TL1}_a (\hat{A}) + \zeta^2 \sqrt{\frac{L\log d}{n}} + \zeta \frac{\|\hat{A} - A_0\|_*}{\sqrt{m_1m_2}} \sqrt{\frac{LM\log d}{n}}\\
        &\lesssim \left\{ 2\|\Sigma\|  + \frac{\zeta }{\sqrt{m_1m_2}} \sqrt{\frac{LM\log d}{n} } \right\} \|A_0\|_*  + \zeta^2 \sqrt{\frac{L\log d}{n}} + \lambda \text{TL1}_a(A_0)  \\
        &\qquad + \left\{ 2\|\Sigma\|  + \frac{\zeta }{\sqrt{m_1m_2}} \sqrt{\frac{LM\log d}{n} }  - \lambda \frac{1+a}{a + \sigma_1(\hat{A})} \right\} \|\hat{A}\|_* \\
        &\lesssim \left\{ 2\|\Sigma\|  + \frac{\zeta }{\sqrt{m_1m_2}} \sqrt{\frac{LM\log d}{n} } \right\} \|A_0\|_*  + \zeta^2 \sqrt{\frac{L\log d}{n}} + \lambda \text{TL1}_a(A_0)  \\
        &\qquad + \left\{ 2\|\Sigma\|  + \frac{\zeta }{\sqrt{m_1m_2}} \sqrt{\frac{LM\log d}{n} }  - \lambda \frac{1+a}{a + \zeta\sqrt{m_1m_2}} \right\} \|\hat{A}\|_* .
    \end{aligned}
    \end{equation}
The penultimate inequality holds by the triangle inequality of nuclear norm: $\|\hat{A} - A_0\|_* \leq \|\hat{A}\|_* + \|A_0\|_*$ and the inequality property of $\text{TL1}_a$: $\text{TL1}_a(A) \geq \sum_{j=1}^{m} \frac{(a+1)\sigma_j(A)}{a+\sigma_1(A)} = \frac{1+a}{a+\sigma_1(A)}\|A\|_*$. In addition, since $\|\hat{A}\|_\infty \leq \zeta$ and hence $\sigma_1(\hat{A}) \leq \zeta \sqrt{m_1m_2}$,  we have the last inequality. 
Furthermore, using the matrix Bernstein's inequality \citep[Lemma 5]{klopp}, we have for
 $n \gtrsim d$ there exists a constant $C^* > 0$ only depending on $c_0$ such that 
\begin{equation} \label{Bernstein}
    \|\Sigma\| \leq C^* \sigma \sqrt{\frac{L \log d}{mn}} \leq C^* \frac{\sigma}{\sqrt{m_1m_2}} \sqrt{\frac{Ld \log d}{n}},
\end{equation}
holds with the probability at least $1-\frac{1}{d}$.

If $\lambda^{-1} = \mathcal{O} \left( \left( \frac{(\zeta \vee \sigma)}{\sqrt{m_1m_2}} \frac{a + \zeta\sqrt{m_1m_2}}{1+a} \sqrt{\frac{Ld\log d}{n}} \right)^{-1} \right)$, then there exist two constants $C_1'$ and $C_2'$ depending on $c_0$ such that 
\begin{equation}
        \frac{1}{m_1m_2}\|\hat{A} - A_0\|_F^2 \leq C_1' \nu \left\{ (\zeta \vee \sigma) \sqrt{\frac{Ld\log d}{n}}  \frac{\|A_0\|_{*}}{\sqrt{m_1m_2}} + \lambda \mathrm{TL}1_a(A_0) \right\} + C_2' \nu \zeta^2 \sqrt{\frac{L\log d}{n}},
\end{equation}
holds with probability at least $1- \frac{\kappa + 1}{d}$, where $\kappa$ is a constant depending on $L$.
\end{proof}

Equipped with Lemma \ref{lemma3}, we are posed to prove Theorem \ref{Them1} and Theorem \ref{Them3}.

\begin{proof}[Proof of Theorem \ref{Them1}]
    Take $\lambda \asymp \frac{(\zeta \vee \sigma)}{\sqrt{m_1m_2}} \frac{a + \zeta \sqrt{m_1m_2}}{1+a} \sqrt{\frac{Ld\log d}{n}}$ and  any $a^{-1} = \mathcal{O}((\zeta \sqrt{m_1m_2})^{-1})$, by the inequality that $\text{TL1}_a(A_0) \leq \frac{1+a}{a} \|A_0\|_*$, we get
    \begin{equation}
        \begin{aligned}
        \lambda \mathrm{TL}1_a(A_0) &\asymp \frac{(\zeta \vee \sigma)}{\sqrt{m_1m_2}} \frac{a + \zeta \sqrt{m_1m_2}}{1+a} \sqrt{\frac{Ld\log d}{n}} \mathrm{TL}1_a(A_0)\\
        &\leq \frac{(\zeta \vee \sigma)}{\sqrt{m_1m_2}} \frac{a + \zeta\sqrt{m_1m_2}}{1+a} \sqrt{\frac{Ld\log d}{n}} \frac{1+a}{a} \|A_0\|_* \\
        &\lesssim (\zeta \vee \sigma) \sqrt{\frac{Ld\log d}{n}} \frac{\|A_0\|_*}{\sqrt{m_1m_2}} \leq (\zeta \vee \sigma) \gamma \sqrt{\frac{Ld\log d}{n}}.
    \end{aligned}
    \end{equation}
   It follows from Lemma \ref{lemma3} that there exist two constants $C_1$ and $C_2$ only depending on $c_0$ such that the estimator $\hat A$ from \eqref{est:hatA} satisfies
    \begin{equation}
        \frac{1}{m_1m_2}\|\hat{A} - A_0\|_F^2 \leq C_1 \nu (\zeta \vee \sigma) \gamma \sqrt{\frac{Ld\log d}{n}} + C_2 \nu \zeta^2 \sqrt{\frac{L\log d}{n}}
    \end{equation}
   holds with probability at least $1- \frac{\kappa + 1}{d}$.
\end{proof}

\tolerance=2000
\emergencystretch=10pt
\begin{proof}[Proof of Theorem \ref{Them3}]
    Since $\text{TL1}_a$ for any $a>0$ is an increasing function with respect to input arguments, it is straightforward that 
    \begin{equation*}
        \text{TL1}_a(A_0) = \sum_{j=1}^{m} \frac{(1+a)\sigma_j(A_0)}{a+\sigma_j(A_0)} \leq \rank (A_0) \frac{(1+a)\sigma_1(A_0)}{a+\sigma_1(A_0)},
    \end{equation*}
which implies that
    \begin{equation*}
        \text{TL1}_a(A_0) \leq \rank(A_0) \frac{(1+a)\zeta \sqrt{m_1m_2}}{a+\zeta \sqrt{m_1m_2}},
    \end{equation*} 
    with $\sigma_1(A_0) \leq \zeta \sqrt{m_1m_2}$. 
 Applying Lemma \ref{lemma3} with $\lambda^{-1} = \mathcal{O} \left( \left( \frac{(\zeta \vee \sigma)}{\sqrt{m_1m_2}} \frac{a + \zeta\sqrt{m_1m_2}}{1+a} \sqrt{\frac{Ld\log d}{n}} \right)^{-1} \right)$, we obtain

    \begin{equation}
        \begin{aligned}
            \frac{1}{m_1m_2}\|\hat{A} - A_0\|_F^2 &\lesssim \lambda \nu \frac{(1+a)\zeta \sqrt{m_1m_2}}{a+\zeta \sqrt{m_1m_2}}\rank(A_0) + \nu \zeta^2 \sqrt{\frac{L\log d}{n}} .
        \end{aligned}
    \end{equation}
    
  Consequently, for any $a>0$, there exist two constants $C_5$ and $C_6$ only depending on $c_0$ such that the estimator $\hat A$ from \eqref{est:hatA} satisfies
    \begin{align}
        \frac{1}{m_1m_2}\|\hat{A} - A_0\|_F^2 \leq C_5 \lambda \nu  \frac{(1+a)\zeta \sqrt{m_1m_2}}{a+\zeta \sqrt{m_1m_2}}\rank(A_0) + C_6 \nu \zeta^2\sqrt{\frac{L\log d}{n}},
    \end{align}
    with probability at least $1- \frac{\kappa + 1}{d}$.
\end{proof}

\section{Proof of Theorem \ref{Them2}}
\label{A2:Them2}

To prove Theorem \ref{Them2}, we introduce Lemma \ref{lemma4} along with some definitions. For any matrix $A \in  \mathbb{R}^{m_1 \times m_2}$, let $U_A$ and $V_A$ be the left and right singular matrices of $A$, and $D_A$ is the diagonal matrix with the singular values of $A$, i.e., the SVD of $A$ is expressed by  $ A= U_A D_A V_A^\intercal $. We denote $r_A:=\rank (A)$ and $\sigma_j(A)$ is the $j$th singular values of $A$,  $j =1,\dots, r_A$. 
We define $S_U(A)$ and $S_V(A)$ to be the linear subspaces spanned by column vectors of $U_A$ and $V_A,$ respectively, and denote their corresponding orthogonal components, denoted by $S_U^\perp$ and $S_V^\perp$. We set
\begin{align}
P_A^\perp(B) = \mathbf P_{S_U^\perp(A)}B\mathbf P_{S_V^\perp(A)} \quad \mbox{and} \quad  P_A(B) = B-P_A^\perp(B),
\end{align}
where $\mathbf P_S$ denotes the projection onto the linear subspace $S.$
Then, for any matrix $B \in \mathbb{R}^{m_1 \times m_2}$, there exits the left and right singular vectors $U_B$, $V_B$ and the diagonal matrix $D_B$ with singular values of $P_A^\perp(B)$ such that  $P_A^\perp(B) = U_BD_BV_B^\intercal$; similarly, let $r_B = \rank (P_A^\perp(B))$, and $\sigma_j(P_A^\perp(B))$ is the $j$th singular values of $P_A^\perp(B)$, $j =1,\dots, r_B$.\\

\begin{lemma}\label{lemma4}
   For any two matrices $A$ and $B$, we have
    \begin{equation}
         \mathrm{TL1}_a(A+P_A^\perp(B)) = \mathrm{TL1}_a(A) + \mathrm{TL1}_a(P_A^\perp(B)).
    \end{equation}
\end{lemma}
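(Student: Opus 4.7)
The plan is to show that the nonzero singular values of $A + P_A^\perp(B)$ are exactly the disjoint union (as a multiset) of the nonzero singular values of $A$ and of $P_A^\perp(B)$; since $\mathrm{TL1}_a$ depends only on singular values through the sum $\sum_j \phi(\sigma_j)$ with $\phi(\sigma)=\frac{(a+1)\sigma}{a+\sigma}$ and $\phi(0)=0$, the desired additivity then follows immediately.

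First I would extract orthogonality of the singular subspaces. By the definition $P_A^\perp(B)=\mathbf P_{S_U^\perp(A)} B \mathbf P_{S_V^\perp(A)}$, the column space of $P_A^\perp(B)$ lies in $S_U^\perp(A)$ and its row space lies in $S_V^\perp(A)$. Taking the thin SVDs $A=U_A D_A V_A^\intercal$ and $P_A^\perp(B)=U_B D_B V_B^\intercal$, this says the columns of $U_B$ are orthogonal to those of $U_A$, and the columns of $V_B$ are orthogonal to those of $V_A$. Hence $[U_A\;U_B]$ and $[V_A\;V_B]$ each have orthonormal columns.

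Next I would assemble a block decomposition:
\begin{equation*}
A+P_A^\perp(B) \;=\; [U_A\;U_B]\,\mathrm{diag}(D_A,D_B)\,[V_A\;V_B]^\intercal .
\end{equation*}
Because the outer matrices have orthonormal columns and the middle matrix is diagonal with nonnegative entries, this is a valid (thin) SVD of $A+P_A^\perp(B)$, possibly after reordering the diagonal entries in decreasing order. Consequently the nonzero singular values of $A+P_A^\perp(B)$ are exactly the concatenation of those of $A$ and of $P_A^\perp(B)$.

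Finally, since $\mathrm{TL1}_a(M)=\sum_{j}\phi(\sigma_j(M))$ with $\phi(0)=0$, only nonzero singular values contribute, and
\begin{equation*}
\mathrm{TL1}_a(A+P_A^\perp(B)) \;=\; \sum_{j=1}^{r_A}\phi(\sigma_j(A))+\sum_{j=1}^{r_B}\phi(\sigma_j(P_A^\perp(B))) \;=\; \mathrm{TL1}_a(A)+\mathrm{TL1}_a(P_A^\perp(B)).
\end{equation*}
I do not anticipate any real obstacle here; the only subtlety is to remember that singular values are defined up to reordering and that padding with zeros does not affect $\mathrm{TL1}_a$ because $\phi(0)=0$, so the block-diagonal construction above is genuinely an SVD (after sorting) and the sum is invariant under reordering.
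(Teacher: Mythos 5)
Your proposal is correct and follows essentially the same route as the paper's own proof: both establish $U_A^\intercal U_B=0$ and $V_A^\intercal V_B=0$ from the definition of $P_A^\perp$, assemble the block decomposition $[U_A\;U_B]\,\mathrm{diag}(D_A,D_B)\,[V_A\;V_B]^\intercal$ as a valid SVD of $A+P_A^\perp(B)$, and then split the sum defining $\mathrm{TL1}_a$. Your explicit remarks about reordering the singular values and about $\phi(0)=0$ are small refinements of the same argument, not a different approach.
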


\begin{proof}[Proof of Lemma \ref{lemma4}]
Some calculations show that
    \begin{equation}
            A+P_A^\perp(B) = U_A D_A V_A^\intercal + U_B D_B V_B^\intercal
            = \begin{pmatrix}
                U_A &  U_B
            \end{pmatrix}
            \begin{pmatrix}
                D_A & 0\\
                0 & D_B
            \end{pmatrix}
            \begin{pmatrix}
                V_A^\intercal\\
                V_B^\intercal
            \end{pmatrix}
             = UDV^\intercal,
    \end{equation}
    where $U = \begin{pmatrix}
            U_A &  U_B
        \end{pmatrix}$, $D = \begin{pmatrix}
            D_A & 0\\
            0 & D_B
        \end{pmatrix}$, and $V^\intercal = \begin{pmatrix}
            V_A^\intercal\\
            V_B^\intercal
        \end{pmatrix}$.

    Next, we show that $U$ is the left singular vector of $A+P_A^\perp(B)$. It is sufficient to verify that $U^{\intercal}U = I$, which is equivalent to $U_A^{\intercal}U_B = 0$.
    By the definition of $P_A^\perp$, it is straightforward to see that the singular vectors of $P_{A}^\perp(B)$ are orthogonal to the space spanned by the singular vectors of $A$, as SVD guarantees that $U_A$ spans the column space of $A$ and $U_B$ spans the orthogonal complement of the column space of $A$. Therefore, $U_A^\intercal U_B = 0$ as desired. Similarly, we have $V^\intercal V = I $, implying that $V$ is the right singular vector of $A+P_A^\perp(B)$. 
    Hence, $D$ is the corresponding diagonal matrix with singular values of $A+P_A^\perp(B)$.

  Lastly, we apply the $\text{TL}1_a$ function to $A+P_A^\perp(B)$, thus getting
    \begin{equation*}
    \text{TL}1_a(A+P_A^\perp(B)) = \sum_{k=1}^{r_A+r_B}\frac{(1+a)D(k,k)}{a+D(k,k)}= \sum_{i=1}^{r_A}\frac{(1+a)D_A(i,i)}{a+D_A(i,i)} + \sum_{j=1}^{r_B}\frac{(1+a)D_B(j,j)}{a+D_B(j,j)}= \text{TL}1_a(A) + \text{TL}1_a(P_A^\perp(B)).
    \end{equation*}
\end{proof}


\begin{proof}[Proof of Theorem \ref{Them2}]
    We start with the derivative of $\text{TL1}_a(A)$ with respect to the matrix $A$.     
    By SVD, we have $A = \sum_{i=1}^{m} \sigma_j(A) u_jv_j^\intercal$, which indicates that $\sigma_j(A) = u_j^\intercal A v_j$, where $u_j$ and $v_j$ are the left and right orthonormal singular vectors of $A$, respectively. Then,
    \begin{equation}\label{eq:derivative}
        \begin{aligned}            \frac{\partial(\text{TL1}_a(A))}{\partial A} &= \frac{\partial}{\partial A} \sum_{j=1}^{m} \frac{(1+a)\sigma_j(A)}{a+\sigma_j(A)} = \sum_{j=1}^{m} \frac{\partial}{\partial(A)} \frac{(1+a)\sigma_j(A)}{a+\sigma_j(A)}
            = \sum_{j=1}^{m} \frac{a(1+a)}{(a+\sigma_j(A))^2} \frac{\partial \sigma_j(A)}{\partial A} \\&= \sum_{j=1}^{m} \frac{a(1+a)}{(a+\sigma_j(A))^2} \frac{\partial (u_j^\intercal A v_j)}{\partial A}
            = \sum_{j=1}^{m} \frac{a(1+a)}{(a+\sigma_j(A))^2} \frac{\partial (\text{tr}(A v_j u_j^\intercal))}{\partial A} = \sum_{j=1}^{m} \frac{a(1+a)}{(a+\sigma_j(A))^2} \frac{u_j v_j^\intercal \partial A}{\partial A} \\
            &=  \sum_{j=1}^{m} \frac{a(1+a)}{(a+\sigma_j(A))^2} u_j v_j^\intercal.
        \end{aligned}
    \end{equation}
    By the Mean Value Theorem, there exists a matrix $\Tilde{A}$ between $\hat{A}$ and $A_0 + P_{A_0}^\perp(\hat{A} - A_0)$ such that
    \begin{equation}\label{ineq:TL1_hat_a}
        \begin{aligned}
            \text{TL1}_a(\hat{A}) &= \text{TL1}_a(A_0 + \hat{A} - A_0) =\text{TL1}_a (A_0 + P_{A_0}^\perp (\hat{A} - A_0) + P_{A_0} (\hat{A} - A_0)) \\
            &=  \text{TL1}_a (A_0 + P_{A_0}^\perp (\hat{A} - A_0)) + \langle \nabla \text{TL1}_a (\Tilde{A}), P_{A_0} (\hat{A} - A_0) \rangle\\
            &\geq \text{TL1}_a (A_0 )+  \text{TL1}_a(P_{A_0}^\perp (\hat{A} - A_0)) - \left\|\nabla \text{TL1}_a (\Tilde{A}) \right\| \|P_{A_0} (\hat{A} - A_0)\|_* \\
            &\geq \text{TL1}_a (A_0 )+  \text{TL1}_a(P_{A_0}^\perp (\hat{A} - A_0)) - \frac{a(1+a)}{a^2} \|P_{A_0} (\hat{A} -A_0)\|_*,
        \end{aligned}
    \end{equation}
    where we use the penultimate inequality because of Lemma \ref{lemma4}, the duality of the nuclear norm, and the last inequality holds because of  \eqref{eq:derivative}.

    Then, we have 
    \begin{equation}
        \begin{aligned}
            &\frac{1}{\nu m_1m_2}\|\hat{A} - A_0\|_F^2 \leq \|\hat{A} - A_0\|^2_{L_2(\Pi)} \\
            &\lesssim \frac{1}{n}\sum_{i=1}^{n} \langle T_i, \hat{A} -A_0 \rangle^2 + \zeta^2 \sqrt{\frac{L\log d}{n}} + \zeta \frac{\|\hat{A} - A_0\|_*}{\sqrt{m_1m_2}} \sqrt{\frac{LM\log d}{n}}\\
            &\lesssim 2\|\Sigma\|\|\hat{A} - A_0\|_* +\lambda \text{TL1}_a (A_0) - \lambda \text{TL1}_a (\hat{A}) + \zeta^2 \sqrt{\frac{L\log d}{n}} + \zeta \frac{\|\hat{A} - A_0\|_*}{\sqrt{m_1m_2}} \sqrt{\frac{LM\log d}{n}}\\
            &\lesssim \left\{ 2\|\Sigma\| + \frac{\zeta}{\sqrt{m_1m_2}} \sqrt{\frac{LM\log d}{n}} \right\} \|\hat{A} - A_0\|_* + \lambda \text{TL1}_a (A_0) - \lambda \text{TL1}_a (\hat{A}) + \zeta^2 \sqrt{\frac{L\log d}{n}}\\
            &\lesssim \frac{(\zeta \vee \sigma)}{\sqrt{m_1m_2}} \sqrt{\frac{Ld\log d}{n}} \big(\|P_{A_0}^\perp (\hat{A} - A_0)\|_* + \|P_{A_0} (\hat{A} - A_0)\|_* \big) + \lambda \text{TL1}_a (A_0) + \zeta^2 \sqrt{\frac{L\log d}{n}}\\
            &\qquad - \lambda \left\{ \text{TL1}_a (A_0 )+  \text{TL1}_a(P_{A_0}^\perp (\hat{A} - A_0)) - \frac{a(1+a)}{a^2} \|P_{A_0} (\hat{A} -A_0)\|_* \right\}\\
            &\lesssim \left\{ \frac{(\zeta \vee \sigma)}{\sqrt{m_1m_2}} \sqrt{\frac{Ld\log d}{n}} + \lambda \frac{a(1+a)}{a^2} \right\} \|P_{A_0} (\hat{A} -A_0)\|_* + \zeta^2 \sqrt{\frac{L\log d}{n}} \\
            &\qquad + \left\{ \frac{(\zeta \vee \sigma)}{\sqrt{m_1m_2}} \sqrt{\frac{Ld\log d}{n}} - \lambda \frac{1+a}{a+\zeta \sqrt{m_1m_2}}\right\} \|P_{A_0}^\perp (\hat{A} -A_0)\|_*,
        \end{aligned}
    \end{equation}
    where the  second last inequality is obtained by \eqref{ineq:TL1_hat_a}, and the last inequality is achieved by $\text{TL1}_a(A) \geq \frac{1+a}{a+\sigma_1(A)}$ and $\sigma_1(A) \leq \zeta \sqrt{m_1m_2}$.
    
    Taking $\lambda \asymp \frac{a+\zeta \sqrt{m_1m_2}}{1+a} \frac{(\zeta \vee \sigma)}{\sqrt{m_1m_2}} \sqrt{\frac{Ld\log d}{n}}$, we get
    \begin{equation*}
        \begin{aligned}
            \lambda \frac{a(1+a)}{a^2} \asymp \frac{a(1+a)}{a^2} \frac{a+\zeta \sqrt{m_1m_2}}{1+a} \frac{(\zeta \vee \sigma)}{\sqrt{m_1m_2}} \sqrt{\frac{Ld\log d}{n}}= \frac{a+\zeta \sqrt{m_1m_2}}{a}\frac{(\zeta \vee \sigma)}{\sqrt{m_1m_2}} \sqrt{\frac{Ld\log d}{n}}.
        \end{aligned}
    \end{equation*}
    Together with $a^{-1} = \mathcal{O}((\zeta \sqrt{m_1m_2})^{-1})$, it deduces to
    \begin{equation}
        \begin{aligned}
            \frac{1}{\nu m_1m_2}\|\hat{A} - A_0\|_F^2 &\lesssim \frac{(\zeta \vee \sigma)}{\sqrt{m_1m_2}} \sqrt{\frac{Ld\log d}{n}} \sqrt{2\tau} \|\hat{A} - A_0\|_F + \zeta^2 \sqrt{\frac{L\log d}{n}},
        \end{aligned}
    \end{equation}
    because of 
    \begin{equation*}
            \|P_{A_0} (\hat{A} -A_0)\|_* \leq \sqrt{\rank (P_{A_0} (\hat{A} -A_0))} \|\hat{A} - A_0\|_\infty \leq \sqrt{2\;\rank (\hat{A} - A_0)} \|\hat{A} - A_0\|_F 
        \leq  \sqrt{2\tau} \|\hat{A} - A_0\|_F.
    \end{equation*}
    Therefore, we have
    \begin{equation*}
        \begin{aligned}
            \frac{1}{m_1m_2}\|\hat{A} - A_0\|_F^2 &\lesssim \nu \frac{(\zeta \vee \sigma)}{\sqrt{m_1m_2}} \sqrt{\frac{Ld\log d}{n}} \sqrt{2\tau} \|\hat{A} - A_0\|_F + \nu \zeta^2 \sqrt{\frac{L\log d}{n}},\\
            \frac{1}{m_1m_2}\|\hat{A} - A_0\|_F^2 &\lesssim \nu^2(\zeta^2 \vee \sigma^2) \tau \frac{Ld\log d}{n} + \nu \zeta^2 \sqrt{\frac{L\log d}{n}}.
        \end{aligned}
    \end{equation*}
  In summary, there exist two constants $C_3$ and $C_4$ only depending on $c_0$ such that the inequality
    \begin{equation}
        \begin{aligned}
            \frac{1}{m_1m_2}\|\hat{A} - A_0\|_F^2 &\leq C_3 \nu^2(\zeta^2 \vee \sigma^2) \tau \frac{Ld\log d}{n} + C_4 \nu \zeta^2 \sqrt{\frac{L\log d}{n}},
        \end{aligned}
    \end{equation}
    holds with probability at least $1-\frac{\kappa+1}{d}$.
\end{proof}

\section{Proof of Theorem \ref{Them4} and Corollary \ref{corollary1}}
\label{A3:Them4,Cor1}

\begin{lemma}\label{lemma5}
    If $\lambda^{-1} = \mathcal{O} \left( \left( \frac{(\zeta \vee \sigma)}{\sqrt{m_1m_2}} \frac{a + \zeta\sqrt{m_1m_2}}{1+a} \sqrt{\frac{Ld\log d}{n}} \right)^{-1} \right)$, for any $a = \smallO((m_1m_2)^\frac{1}{4})$, there exists a constant $c_1$ such that the smallest non-zero singular value of 
  the estimator $\hat{A} \in \mathbb{R}^{m_1 \times m_2}$ obtained by \eqref{est:hatA}  shall be greater than or equal to $c_1 \left( a^3 + a^2 \sqrt{m_1m_2} \right)^\frac{1}{4}$.
\end{lemma}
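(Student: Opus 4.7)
The plan is to derive a lower bound on any positive singular value of $\hat A$ from the first-order stationarity condition of the non-convex objective in \eqref{est:hatA}. The crucial fact, which follows from \eqref{eq:derivative}, is that the gradient of $\mathrm{TL1}_a$ at a matrix with SVD $\sum_j \sigma_j u_j v_j^\intercal$ is $\sum_j \tfrac{a(1+a)}{(a+\sigma_j)^2}u_jv_j^\intercal$. Since the rank-one dyads satisfy $\langle u_{j'}v_{j'}^\intercal,u_jv_j^\intercal\rangle = \delta_{j'j}$, projecting the KKT equation onto a fixed dyad $u_jv_j^\intercal$ isolates a single term and turns the stationarity into a scalar inequality in $\sigma_j$ alone.

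Concretely, the KKT condition at $\hat A$ provides a subgradient piece $W$ lying in $\mathrm{range}(P_{\hat A}^\perp)$ with $\|W\|\le(1+a)/a$ (corresponding to the zero singular values of $\hat A$), together with a Lagrange multiplier $M$ supported on entries where the $\ell_\infty$ constraint is active, such that
\begin{equation*}
\tfrac{2}{n}\sum_i\langle T_i,\hat A-A_0\rangle T_i - 2\Sigma + \lambda\Big(\sum_{j:\sigma_j(\hat A)>0}\tfrac{a(1+a)}{(a+\sigma_j(\hat A))^2}u_jv_j^\intercal + W\Big) + M = 0.
\end{equation*}
Taking the Frobenius inner product with $u_j v_j^\intercal$ for any positive $\sigma_j := \sigma_j(\hat A)$ makes the $W$-term vanish (since $u_jv_j^\intercal\in\mathrm{range}(P_{\hat A})$), and nuclear-spectral duality yields
\begin{equation*}
\lambda\tfrac{a(1+a)}{(a+\sigma_j)^2}\le 2\|\Sigma\| + 2\big\|\tfrac{1}{n}\sum_i\langle T_i,\hat A-A_0\rangle T_i\big\| + |\langle M,u_jv_j^\intercal\rangle|.
\end{equation*}

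I would then bound each term on the right. The first is controlled by the matrix Bernstein inequality invoked in \eqref{Bernstein}, giving $\|\Sigma\|\lesssim \tfrac{\sigma}{\sqrt{m_1m_2}}\sqrt{Ld\log d/n}$. The second is bounded by an analogous matrix Bernstein argument (combined with a peeling step in the spirit of Lemma \ref{lemma2}) that uses $\|\hat A-A_0\|_\infty\le 2\zeta$ and Assumption \ref{assump2}, producing the same order with $\zeta$ in place of $\sigma$. Plugging in $\lambda\asymp \tfrac{\zeta\vee\sigma}{\sqrt{m_1m_2}}\tfrac{a+\zeta\sqrt{m_1m_2}}{1+a}\sqrt{Ld\log d/n}$ and rearranging yields $(a+\sigma_j)^2\gtrsim a\,(a+\zeta\sqrt{m_1m_2})$. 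The hypothesis $a=\smallO((m_1m_2)^{1/4})$ ensures that $a\ll\sqrt{a(a+\zeta\sqrt{m_1m_2})}$, so after subtracting $a$ and absorbing constants, $\sigma_j\gtrsim \sqrt{a(a+\zeta\sqrt{m_1m_2})}$, which in turn dominates $(a^3+a^2\sqrt{m_1m_2})^{1/4}$ under the assumed regime on $a$, establishing the claimed bound.

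The main obstacle is handling the Lagrange-multiplier contribution $|\langle M,u_jv_j^\intercal\rangle|$: it vanishes whenever the $\ell_\infty$ constraint is strictly inactive at $\hat A$, but in general one has to exploit the entry-wise support structure of $M$ on boundary entries and a one-sided directional-derivative argument along $\pm u_jv_j^\intercal$ (consistent with the normal cone of $\{\|A\|_\infty\le\zeta\}$) to show that the boundary contribution can be absorbed into the spectral-norm bounds. A secondary technicality is obtaining the uniform matrix Bernstein bound on $\|\tfrac{1}{n}\sum_i\langle T_i,\hat A-A_0\rangle T_i\|$ despite $\hat A$ being data-dependent; this is handled by a peeling argument over $\ell_\infty$-balls mirroring the derivation of Lemma \ref{lemma2}.
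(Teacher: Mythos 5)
Your core mechanism is the same as the paper's: at a first-order optimality point the TL1 contribution $\lambda a(1+a)/(a+\sigma_j)^2$ in the direction $u_jv_j^\intercal$ must be balanced by the gradient of the quadratic loss, and since the former blows up as $\sigma_j\to 0$ this forces a lower bound on every positive singular value. The paper packages this as a derivative-positivity/contradiction argument in the spirit of \citet[Lemma 1]{fan2001variable} — differentiating the objective with respect to a single singular value with $U,V$ fixed and showing the derivative is strictly positive below the threshold — rather than as a KKT identity at the nonzero singular values, but that distinction is largely cosmetic. Both arguments also share the same unaddressed delicacy about the active $\|\cdot\|_\infty$ constraint (the paper simply ignores it), so I do not count your hand-waving there against you.

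The genuine problem is your bound on the loss-gradient term. You pair the stationarity equation with $u_jv_j^\intercal$ and reduce to $\bigl\|\tfrac{1}{n}\sum_i\langle T_i,\hat A-A_0\rangle T_i\bigr\|$, claiming it is $\mathcal{O}\bigl(\tfrac{\zeta}{\sqrt{m_1m_2}}\sqrt{Ld\log d/n}\bigr)$ by "an analogous matrix Bernstein argument." Matrix Bernstein only controls the centered fluctuation; the mean $\EE[\langle T,\hat A-A_0\rangle T]$ is the matrix with entries $\pi_{kl}(\hat A-A_0)(k,l)$, whose spectral norm is governed by $\|\hat A-A_0\|_F$ (it is at most $\tfrac{\nu}{m_1m_2}\|\hat A-A_0\|_F$ under Assumption \ref{assump2}) and can be as large as order $\zeta/\sqrt{m_1m_2}$ — exceeding your claimed bound by a factor $\sqrt{n/(Ld\log d)}$. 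No peeling over $\ell_\infty$-balls removes this non-random term; it must be controlled through the estimation-error bound. This is exactly why the paper instead applies Cauchy--Schwarz to obtain $\sqrt{\tfrac{1}{n}\sum_i\langle T_i,\hat A-A_0\rangle^2}\cdot\sqrt{\tfrac{1}{n}\sum_i\langle T_i,u_jv_j^\intercal\rangle^2}$, bounds the first factor by $\mathcal{O}(\lambda(1+a))$ via Lemma \ref{lemma3}, and the second by a Talagrand/peeling bound of order $\sqrt{Ld\log d/(nm_1m_2)}$. If you repair your step by feeding the Lemma \ref{lemma3} error bound into the mean term, the right-hand side inflates to order $\sqrt{\lambda(1+a)}/\sqrt{m_1m_2}$; the resulting inequality $(a+\sigma_j)^2\gtrsim a\sqrt{\lambda(1+a)m_1m_2}$ still dominates $a(a+\zeta\sqrt{m_1m_2})^{1/2}$ for the prescribed $\lambda$, so the stated threshold $c_1(a^3+a^2\sqrt{m_1m_2})^{1/4}$ survives — but the stronger intermediate bound $\sigma_j\gtrsim\sqrt{a(a+\zeta\sqrt{m_1m_2})}$ you assert is not justified, and the step as written is incorrect.
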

\begin{proof}[Proof of Lemma \ref{lemma5}]
   Suppose the estimator matrix $\hat{A}$ is of rank $k$ and the true matrix $A_0$ is of rank $r_0.$ We ignore the oracle case when $k\leq r_0$, and instead assume that $r_0 \leq k \leq m$. Denote the smallest non-zero singular value of $\hat{A}$ by $\sigma_k$. We prove by contradiction by assuming   
   $\sigma_k\leq c_1 \left( a^3 + a^2 \sqrt{m_1m_2} \right)^\frac{1}{4}$.

    Suppose that the left and right orthonormal singular vectors of $A$ are $\{u_j\}$ and $\{v_j\}$, while the diagonal matrix $S = \mbox{diag}(\sigma_1, \dots, \sigma_m)$ contains the singular values of $A$ in decreasing order. Then we have $A = \sum_{j=1}^{m} \sigma_j u_j v_j^\intercal$ by SVD. Similarly, we write $A_0 = \sum_{j=1}^{m} \sigma_j^* u_j^* v_j^*{^\intercal}$, where $\{u^*_j\}$, $\{v_j^*\}$, and $S^* = \mbox{diag}(\sigma_1^*, \dots, \sigma_m^*)$ are the left, right orthonormal singular vectors and the singular values of $A_0$, respectively.
           Denote $Q(A)$ to be the objective function of \eqref{est:hatA}, i.e., $$
    Q(A) = \frac{1}{n}\sum_{i=1}^{n} \left( \langle T_i, A \rangle -Y_i \right )^2 + \lambda \mathrm{TL}1_a(A) := l(A) + \lambda\mathrm{TL}1_a(A),$$
    where $l(A)$ denotes the least square term that can be expressed as 
    $$l(A) = \frac{1}{n}\sum_{i=1}^{n} \left( \langle T_i, \sum_{j=1}^{m} \sigma_j u_j v_j^\intercal \rangle -Y_i \right )^2.$$

    With fixed $U$ and $V$, we can compute the derivative of $Q$ with any singular value $\sigma_s$ with $s > k$ 
    \begin{equation}
    \begin{aligned}
        \frac{\partial Q(S)}{\partial \sigma_s} & = \frac{\partial l(S)}{\partial \sigma_s} + \lambda \frac{\partial \mathrm{TL}1_a(S)}{\partial \sigma_s}\\
        &= \frac{2}{n} \sum_{i=1}^{n} \left[ \langle T_i, \sum_{j=1}^{m} \sigma_j u_j v_j^\intercal \rangle -Y_i \right] \langle T_i, u_s v_s^\intercal \rangle + \lambda \frac{a(1+a)}{(a+\sigma_s)^2}\\
        &=\frac{2}{n} \sum_{i=1}^{n} \left[ \langle T_i, \sum_{j=1}^{m} \sigma_j^* u_j^* v_j^*{^\intercal} \rangle -Y_i \right] \langle T_i, u_s v_s^\intercal \rangle \\
        & \qquad + \frac{2}{n} \sum_{i=1}^{n} \langle T_i, \sum_{j=1}^{m} \left(\sigma_j u_j v_j{^\intercal} - 
        \sigma_j^* u_j^* v_j^*{^\intercal} \right)\rangle \langle T_i, u_s v_s^\intercal \rangle + \lambda \frac{a(1+a)}{(a+\sigma_s)^2}\\
        &= \frac{2}{n} \sum_{i=1}^{n} \left[ \langle T_i, A_0 \rangle -Y_i \right] \langle T_i, u_s v_s^\intercal \rangle  + \frac{2}{n} \sum_{i=1}^{n} \langle T_i, \hat{A}-A_0 \rangle \langle T_i, u_s v_s^\intercal \rangle + \lambda \frac{a(1+a)}{(a+\sigma_s)^2}\\
        &= 2\langle \Sigma, u_s v_s^\intercal \rangle + \frac{2}{n} \sum_{i=1}^{n} \langle T_i, \hat{A}-A_0 \rangle \langle T_i, u_s v_s^\intercal \rangle + \lambda \frac{a(1+a)}{(a+\sigma_s)^2}  \\
        &\leq 2\|\Sigma\| \|u_s v_s^\intercal\|_* + 2 \sqrt{\frac{1}{n}\sum_{i=1}^{n} \langle T_i, \hat{A}-A_0 \rangle^2} \sqrt{\frac{1}{n} \sum_{i=1}^{n} \langle T_i, u_s v_s^\intercal \rangle^2} + \lambda \frac{a(1+a)}{(a+\sigma_s)^2}\\
        &=: I_1 + I_2+ I_3,
    \end{aligned}
    \end{equation}
    where the last-second inequality is obtained by the duality of the nuclear norm and Cauchy-Schwarz inequality. We estimate the three terms $I_1, I_2,$ and $I_3$ individually. 
    
\textbf{For $I_1$.} Since $u_s v_s^\intercal$ is a rank-1 matrix, then $\|u_s v_s^\intercal\|_* = \|u_s\| \|v_s^\intercal\| = 1$. We further use  \eqref{Bernstein} to get
    \begin{equation}\label{I_1}
        I_1 = \mathcal{O} (\sqrt{\frac{Ld \log d}{nm_1m_2}}),
    \end{equation}
    holds with probability at least $1-\frac{1}{d}$.

    \textbf{For $I_2$.}
    Following Lemma \ref{lemma1} and Lemma \ref{lemma2}, we take $\gamma = 1/\sqrt{m_1m_2}$ in the constraint set \eqref{constrain_set}  and let $H_i = \langle T_i, u_s v_s^\intercal \rangle^2$ and define
    \begin{equation*}
            V := n \mathbb{E}(H_i^2) + 16\frac{n}{\sqrt{m_1m_2}}\sqrt{\frac{Ld\log d}{nm_1m_2}} = \frac{n}{m_1m_2} + 16\frac{n}{\sqrt{m_1m_2}} \sqrt{\frac{Ld\log d}{nm_1m_2}}  \lesssim \frac{n}{m_1m_2}, 
    \end{equation*}
    by $n\gtrsim d\log d$.
Taking  $t \gtrsim \sqrt{(Ld\log d) / (nm_1m_2)}$ and using the Talagrand concentration inequality  \citep[Theorem 2.6]{koltchinskii2011oracle}, 
we have for a universal constant $K$ that
    \begin{equation} \label{inq:talagrand}
            \mathbb{P}\bigg\{ \underset{u_s v_s^\intercal \in \mathcal{K}(1, \gamma)}{\sup}|Z_\gamma - \mathbb{E}(Z_\gamma)| \geq t \bigg\} \leq K \exp \bigg\{ -\frac{1}{K} tn \; \log \left(1+tm_1m_2\right) \bigg\}\leq K \exp \bigg\{ -\frac{1}{K} \frac{t^2 nm_1m_2}{1+tm_1m_2} \bigg\},
    \end{equation}
which implies that the inequality
    \begin{equation}
        \begin{aligned}
            \frac{1}{n}\langle T_i, u_s v_s^\intercal \rangle^2 & \leq \|u_s v_s^\intercal\|^2_{L_2(\Pi)} + \sqrt{\frac{Ld\log d}{nm_1m_2}}
            \leq \frac{\nu}{m_1m_2} \|u_s v_s^\intercal\|^2_F + \sqrt{\frac{Ld\log d}{nm_1m_2}}\\
            &= \frac{\nu}{m_1m_2} + \sqrt{\frac{Ld\log d}{nm_1m_2}}
             = \bigO(\sqrt{\frac{Ld\log d}{nm_1m_2}}),
        \end{aligned}
    \end{equation}
 holds   with high probability. By Lemma \ref{lemma3} with $\lambda^{-1} = \mathcal{O} \left(\left( \frac{(\zeta \vee \sigma)}{\sqrt{m_1m_2}} \frac{a + \zeta\sqrt{m_1m_2}}{1+a} \sqrt{\frac{Ld\log d}{n}} \right)^{-1}\right)$, we get
    \begin{equation}
        \begin{aligned}
            \frac{1}{n}\langle T_i, \hat{A}-A_0 \rangle^2 &\leq \|\hat{A}-A_0\|^2_{L_2(\Pi)} + \zeta^2 \sqrt{\frac{L\log d}{n}} + \zeta \frac{\|\hat{A}-A_0\|_*}{\sqrt{m_1m_2}} \sqrt{\frac{LM\log d}{n}}\\
            &\leq \frac{\nu}{m_1m_2} \|\hat{A}-A_0\|_F^2 + \zeta^2 \sqrt{\frac{L\log d}{n}} + \zeta r_0 \sqrt{\frac{Ld\log d}{n}}\\
            & \lesssim \nu^2 \left\{ (\zeta \vee \sigma) \sqrt{\frac{Ld\log d}{n}}  \frac{\|A_0\|_{*}}{\sqrt{m_1m_2}} + \lambda \mathrm{TL}1_a(A_0) \right\} + \nu^2 \zeta^2 \sqrt{\frac{L\log d}{n}}\\
            &= \mathcal{O}\left(\lambda (1+a)\right),
        \end{aligned}
    \end{equation}
    where the last inequality is because of $\mathrm{TL}1_a(A_0) \leq (1+a)r_0$.
    Therefore, $I_2 = \mathcal{O}\left( \sqrt{\lambda(1+a)} \left(\frac{Ld\log d}{nm_1m_2}\right)^{1/4}\right)$.
    Together with  \eqref{I_1}, we have
    \begin{equation}
        \begin{aligned}
            I_1+I_2 = \mathcal{O}\left( \sqrt{\lambda(1+a)} \left(\frac{Ld\log d}{nm_1m_2}\right)^{1/4}\right).
        \end{aligned}
    \end{equation}

   \textbf{For $I_3$.} We know that $I_3 = \lambda \frac{a(1+a)}{(a+\sigma_s)^2}$ for $s > k$. We verify whether $I_3 > I_1 + I_2$, when $a = \smallO \left((m_1m_2)^\frac{1}{4}\right)$ and $\sigma_s \leq c_1 \left( a^3 + a^2 \sqrt{m_1m_2} \right)^\frac{1}{4}$. Then we have
        \begin{align}
            \frac{I_3}{I_1+I_2} &\gtrsim \frac{a\sqrt{\lambda(1+a)}}{(a+\sigma_s)^2} \left( \frac{Ld\log d}{nm_1m_2}\right)^{-\frac{1}{4}}\\
            &\gtrsim \frac{a \sqrt{1+a}}{(a+\sigma_s)^2} \sqrt{\frac{a + \zeta\sqrt{m_1m_2}}{\sqrt{m_1m_2}(1+a)} \sqrt{\frac{Ld\log d}{n}} } \left( \frac{Ld\log d}{nm_1m_2}\right)^{-\frac{1}{4}}\\
            &=\frac{a\sqrt{(a+\zeta\sqrt{m_1m_2})}}{(a+\sigma_s)^2},
        \end{align}
        which leads to the desired bound,
        \begin{equation}
                     \left( \frac{I_3}{I_1+I_2} \right)^2 \gtrsim \frac{a^2 (a+\zeta\sqrt{m_1m_2})}{(a+\sigma_s)^4} > \frac{a^2 (a+\zeta\sqrt{m_1m_2})}{\sigma_s^4} \gtrsim \frac{a+\zeta\sqrt{m_1m_2}}{a+\sqrt{m_1m_2}} = \mathcal{O}(1). 
        \end{equation}

    Overall, we have $\frac{\partial Q(S)}{\partial \sigma_s} > 0$ is always satisfied, then  it follows from \citet[Lemma 1]{fan2001variable} that there exists a constant $c_1$ such that $\sigma_k \geq c_1 \left( a^3 + a^2 \sqrt{m_1m_2} \right)^\frac{1}{4}$.
\end{proof}

\begin{proof}[Proof of Theorem 4]
    By the optimality of the estimator $\hat{A}$ together with \eqref{inq:base_derived}, we deduce
    \begin{align*} 
        \lambda \text{TL1}_a (\hat{A}) &\leq 2\|\Sigma\| \sqrt{\rank (\hat{A}- A_0)} \|\hat{A}- A_0\|_F +\lambda \text{TL1}_a (A_0).
\end{align*}
We further use $\mathrm{TL}1_a (A_0) \leq (1+a) \rank (A_0)$ to get
        \begin{align*}
        \lambda \text{TL1}_a (\hat{A}) \lesssim \sqrt{\frac{Ld\log d}{nm_1m_2}} \sqrt{\rank(\hat{A}) + \rank(A_0)} \sqrt{m_1m_2\lambda \frac{(1+a)\zeta \sqrt{m_1m_2}}{a+\zeta \sqrt{m_1m_2}}}+ \lambda(1+a)\rank(A_0).
        \end{align*}
Using $\rank(\hat{A}) \geq \rank(A_0)$, we have
        \begin{align*}
        \text{TL1}_a (\hat{A}) \lesssim \lambda^{-\frac{1}{2}} \sqrt{\rank(\hat{A})}\sqrt{\frac{Ld\log d}{n}}  \sqrt{\frac{(1+a)\zeta \sqrt{m_1m_2}}{a+\zeta \sqrt{m_1m_2}}}+ (1+a)\rank(A_0).
    \end{align*}

   Since $\mathrm{TL}1_a(\hat{A}) \geq \rank (\hat{A}) \frac{(1+a) \sigma_{\rank (\hat{A})}(\hat{A})}{a+\sigma_{\rank (\hat{A})}(\hat{A})}$, we have

    \begin{align*}
        \rank (\hat{A}) &\lesssim \left( \lambda^{-\frac{1}{2}} \sqrt{\rank(\hat{A})}\sqrt{\frac{Ld\log d}{n}}  \sqrt{\frac{(1+a)\zeta \sqrt{m_1m_2}}{a+\zeta \sqrt{m_1m_2}}} + (1+a)\rank(A_0) \right) \frac{a+\sigma_{\rank (\hat{A})}}{(1+a)\sigma_{\rank (\hat{A})}},
    \end{align*}
which can be derived into two scenarios:

Scenario (i):
    \begin{align}
        \rank (\hat{A}) & \lesssim \lambda^{-1} \frac{Ld\log d}{n} \frac{\sqrt{m_1m_2}}{(1+a)(a+ \sqrt{m_1m_2})}  \left(\frac{a+\sigma_{\rank (\hat{A})}}{\sigma_{\rank (\hat{A})}}\right)^2;
    \end{align}
and Scenario (ii): 
    \begin{align}
        \rank (\hat{A}) & \lesssim \rank(A_0) \frac{a+\sigma_{\rank (\hat{A})}}{\sigma_{\rank (\hat{A})}}.
    \end{align}
    
    Using the condition that $\sigma_{\rank (\hat{A})} (\hat{A}) \geq c_1 \left( a^3 + a^2 \sqrt{m_1m_2} \right)^\frac{1}{4}$ by Lemma \ref{lemma5}, these two scenarios can be further simplified to

Scenario (i):
    \begin{equation} \label{senario:1}
    \begin{aligned}
        \rank(\hat{A}) &\lesssim \lambda^{-1} \frac{Ld\log d}{n} \frac{\sqrt{m_1m_2}}{(1+a)(a+ \sqrt{m_1m_2})}   \left(\frac{a+ \left( a^3 + a^2 \sqrt{m_1m_2} \right)^\frac{1}{4}}{\left( a^3 + a^2 \sqrt{m_1m_2} \right)^\frac{1}{4}}\right)^2\\
        &\lesssim \lambda^{-1} \frac{Ld\log d}{n} \frac{\sqrt{m_1m_2}}{(1+a)(a+ \sqrt{m_1m_2})}  \left(\frac{\sqrt{a}}{\left( a + \sqrt{m_1m_2} \right)^\frac{1}{4}} +1 \right)^2.
        &    
    \end{aligned}
    \end{equation}
    
Scenario (ii):
    \begin{equation} \label{senario:2}
    \begin{aligned}
        \rank (\hat{A}) & \lesssim \rank(A_0) \frac{a+\left( a^3 + a^2 \sqrt{m_1m_2} \right)^\frac{1}{4}}{\left( a^3 + a^2 \sqrt{m_1m_2} \right)^\frac{1}{4}} = \rank(A_0) \left(\frac{\sqrt{a}}{\left( a + \sqrt{m_1m_2} \right)^\frac{1}{4}} +1 \right).
    \end{aligned}
    \end{equation}
    
    Combine these two senarios \eqref{senario:1} and \eqref{senario:2}, we have
    \begin{align*}
    \rank (\hat{A}) \lesssim & \max \left\{ \lambda^{-1} \frac{Ld \log d}{n} \frac{\sqrt{m_1m_2}}{(1+a)(a+ \sqrt{m_1m_2})}  \left(\frac{\sqrt{a}}{\left( a + \sqrt{m_1 m_2} \right)^{1/4}} + 1 \right)^2, \right. \\
    & \qquad \qquad \left. \rank(A_0) \left(\frac{\sqrt{a}}{\left( a + \sqrt{m_1 m_2} \right)^{1/4}} + 1 \right) \right\}.
    \end{align*}

    Hence, there exists a constant $C_7$ only depending on $c_0$ such that 
    \begin{align*}
    \rank (\hat{A}) \leq & C_7 \left\{ \lambda^{-1} \frac{Ld \log d}{n} \frac{\sqrt{m_1m_2}}{(1+a)(a+ \sqrt{m_1m_2})}  \left(\frac{\sqrt{a}}{\left( a + \sqrt{m_1 m_2} \right)^{1/4}} + 1 \right)^2 \right. \\
    & \qquad \qquad \qquad \qquad \qquad \qquad \qquad \left. + \rank(A_0) \left(\frac{\sqrt{a}}{\left( a + \sqrt{m_1 m_2} \right)^{1/4}} + 1 \right) \right\},
    \end{align*}
    with high probability.
\end{proof}

\begin{proof}[Proof of Corollary \ref{corollary1}]
  Replacing $\lambda$ with the order of $\frac{(\zeta \vee \sigma)}{\sqrt{m_1m_2}} \frac{a + \zeta \sqrt{m_1m_2}}{1+a} \sqrt{\frac{Ld \log d}{n}}$ in Theorem \ref{Them3}, we get
    \begin{equation}
        \begin{aligned}
            \frac{1}{m_1m_2}\|\hat{A} - A_0\|_F^2 &\lesssim \nu \tau \frac{(\zeta \vee \sigma)}{\sqrt{m_1m_2}} \frac{a + \zeta \sqrt{m_1m_2}}{1+a} \sqrt{\frac{Ld \log d}{n}} \frac{(1+a)\zeta \sqrt{m_1m_2}}{a+\zeta \sqrt{m_1m_2}} + \nu \zeta^2 \sqrt{\frac{L\log d}{n}} \\
            &= \nu \tau (\zeta^2 \vee \sigma^2) \sqrt{\frac{Ld \log d}{n}} + \nu \zeta^2 \sqrt{\frac{L\log d}{n}}.
        \end{aligned}
    \end{equation}
 According to Theorem \ref{Them4} regarding the upper bound on the rank of the estimator $\hat{A}$ (i.e., two scenarios)  and $a = \smallO\left((m_1m_2)^{1/4}\right)$, we have
 
Scenario (i):
    \begin{equation} \label{cor_s:1}
    \begin{aligned}
        \rank (\hat{A}) &\lesssim \frac{(1+a)\sqrt{m_1m_2}}{a+\sqrt{m_1m_2}} \sqrt{\frac{n}{Ld\log d}}\frac{Ld\log d}{n} \frac{\sqrt{m_1m_2}}{(1+a)(a+ \sqrt{m_1m_2})}  \left(\frac{\sqrt{a}}{\left( a + \sqrt{m_1m_2} \right)^\frac{1}{4}} +1 \right)^2\\
        &\lesssim \frac{m_1m_2}{(a+\sqrt{m_1m_2})^2} \sqrt{\frac{Ld\log d}{n}} \left(\frac{\sqrt{a}}{\left( a + \sqrt{m_1m_2} \right)^\frac{1}{4}} +1 \right)^2= \mathcal{O}_p(1),
    \end{aligned}
    \end{equation}
Scenario (ii): 
    \begin{equation}\label{cor_s:2}
        \rank (\hat{A}) = \mathcal{O}_p(\rank(A_0)).
    \end{equation}

   Using \eqref{cor_s:1} and \eqref{cor_s:2}, we obtain the desired result $\rank (\hat{A}) = \mathcal{O}_p(\rank(A_0))$.   
\end{proof}

\section{Parameter tuning}
\label{A6:tune_para}

We compare the TL1 model \eqref{est:hatA} with nuclear-norm \citep{candes2012exact}, max-norm \citep{cai2016matrix}, and a hybrid approach in a combination of the nuclear norm and max norm \citep{fang2018max}. Each method has its respective hyper-parameters. We conduct a grid search to find the optimal parameters that yield the smallest relative error (RE) defined in Section \ref{sec:4.1} within the following ranges: 
\begin{itemize}
    \item For max-norm: $\lambda \in \{10^{-1}, 10^{-2}, 10^{-3}, 10^{-4}, 10^{-5}, 10^{-6}\}$
    \item For hybrid: 
        \begin{itemize}
            \item $\lambda \in \{10^{-1}, 10^{-2}, 10^{-3}, 10^{-4}, 10^{-5}, 10^{-6}\}$
            \item $\mu \in \{10^{-1}, 10^{-2}, 10^{-3}, 10^{-4}, 10^{-5}, 10^{-6}\}$
        \end{itemize}
    \item For nuclear norm: $\mu \in \{10^{-1}, 10^{-2}, 10^{-3}, 10^{-4}, 10^{-5}, 10^{-6}\}$
    \item For TL1: 
        \begin{itemize}
            \item $a \in \{10^{-1}, 1, 10, 20, 50, 100, 200, 500, 600, 900, 1500, 3000\}$
            \item $\lambda \in \{10^{-1}, 10^{-2}, 10^{-3}, 10^{-4}, 10^{-5}, 10^{-6}\}$
        \end{itemize}
\end{itemize}
All the values for $\lambda$ and $\mu$ are multiplied by $\|Y\|_F$.
Following Algorithm \ref{alg:TL1_ADMM}, we implement the TL1 model by ourselves and fix the parameter $\rho = 0.1$ as it only affects the convergence rate, but not the performance.  We obtain the codes of the hybrid approach \citep{fang2018max} from the authors, which includes the nuclear norm and the max norm as its special case.  All the experiments are run on a MacBook Pro with an Apple M2 chip and 8GB of memory.

\section{More experimental results}
\label{A5:Other_res}

Table \ref{tab_S1} contains the results of relative errors for the reconstructed matrix to the ground truth under Scheme 1, 2, and 3 settings with
noiseless and noisy data for SNR=20. Each reported value is averaged over 50 random realizations with
standard deviation in parentheses. We also highlight the two best RE values in the table in a similar way as Tables \ref{tab:scheme1}-\ref{tab:scheme3}.

\begin{table}[h]
  \centering
  \caption{Similar to Tables \ref{tab:scheme1}-\ref{tab:scheme3} with SNR =20 (mean and standard deviation over 50 trials are reported)}\label{tab_S1} 
\begin{tabular}{ccccccccc}
    \toprule
      \multicolumn{6}{c}{Scheme 1 with SNR=20}\\
    \midrule
       & (r, SR) & Max-norm & Hybrid & Nuclear & TL1\\
    \midrule
      300 &(5, 0.1)& 0.327 (0.012)& \textit{0.064 (0.013)}& 0.152 (0.007)& \textbf{0.029 (0.001)}\\
      &(5, 0.2)& 0.195 (0.005)& \textbf{0.009 (0.002)}& 0.068 (0.001)& \textit{0.013 (0.001)}\\
      &(10, 0.1)& 0.568 (0.011)& \textit{0.495 (0.013)}& 0.505 (0.012)& \textbf{0.102 (0.003)}\\
      &(10, 0.2)& 0.248 (0.005)& \textbf{0.021 (0.002)}& 0.120 (0.002)& \textit{0.024 (0.001)}\\
    \midrule

      500&(5, 0.1)& 0.204 (0.007)& 0.014 (0.000)& \textit{0.013 (0.002)}& \textbf{0.005 (0.000)}\\
      &(5, 0.2)& 0.129 (0.004)& 0.008 (0.000)& \textit{0.007 (0.000)}& \textbf{0.003 (0.000)}\\
      &(10, 0.1)& 0.270 (0.007)& 0.082 (0.006)& \textit{0.081 (0.006)}& \textbf{0.009 (0.001)}\\
      &(10, 0.2)& 0.155 (0.003)& 0.016 (0.000)& \textit{0.011 (0.000)}& \textbf{0.006 (0.000)}\\
    \bottomrule\\

    \toprule
      \multicolumn{6}{c}{Scheme 2 with SNR=20}\\
    \midrule
       & (r, SR) & Max-norm & Hybrid & Nuclear & TL1\\
    \midrule
      300 &(5, 0.1)& 0.305 (0.033)& \textbf{0.281 (0.028)}& 0.759 (0.017)& \textit{0.368 (0.049)}\\
      &(5, 0.2)& 0.174 (0.016)& \textit{0.139 (0.022)}& 0.606 (0.020)& \textbf{0.063 (0.043)}\\
      &(10, 0.1)& 0.480 (0.026)& \textbf{0.477 (0.029)}& 0.798 (0.010)& \textit{0.505 (0.031)}\\
      &(10, 0.2)& 0.217 (0.014)& \textit{0.204 (0.026)}& 0.610 (0.016)& \textbf{0.138 (0.029)}\\
    \midrule

      500&(5, 0.1)& 0.265 (0.027)& \textit{0.209 (0.022)}& 0.753 (0.014)& \textbf{0.048 (0.037)}\\
      &(5, 0.2)& 0.144 (0.007)& \textit{0.126 (0.006)}& 0.606 (0.015)& \textbf{0.006 (0.000)}\\
      &(10, 0.1)& 0.337 (0.015)& \textit{0.286 (0.016)}& 0.762 (0.008)& \textbf{0.130 (0.024)}\\
      &(10, 0.2)& 0.178 (0.007)& \textit{0.145 (0.008)}& 0.609 (0.010)& \textbf{0.012 (0.011)}\\
    \bottomrule\\

    \toprule
      \multicolumn{6}{c}{Scheme 3 with SNR=20}\\
    \midrule
       & (r, SR) & Max-norm & Hybrid & Nuclear & TL1\\
    \midrule
      300 &(5, 0.1)& 0.464 (0.021)& \textit{0.451 (0.023)}& 0.784 (0.010)& \textbf{0.446 (0.031)}\\
      &(5, 0.2)& 0.184 (0.018)& \textit{0.152 (0.024)}& 0.611 (0.017)& \textbf{0.065 (0.035)}\\
      &(10, 0.1)& 0.543 (0.018)& \textit{0.533 (0.018)}& 0.820 (0.007)& \textbf{0.521 (0.018)}\\
      &(10, 0.2)& 0.229 (0.019)& \textit{0.221 (0.022)}& 0.625 (0.012)& \textbf{0.147 (0.034)}\\
    \midrule

      500&(5, 0.1)& 0.393 (0.025)& \textit{0.323 (0.026)}& 0.757 (0.014)& \textbf{0.107 (0.031)}\\
      &(5, 0.2)& 0.148 (0.061)& \textit{0.127 (0.053)}& 0.607 (0.014)& \textbf{0.010 (0.008)}\\
      &(10, 0.1)& 0.508 (0.012)& \textit{0.472 (0.013)}& 0.793 (0.007)& \textbf{0.329 (0.015)}\\
      &(10, 0.2)& 0.182 (0.008)& \textit{0.151 (0.009)}& 0.611 (0.010)& \textbf{0.015 (0.015)}\\
    \bottomrule
    
    \end{tabular}
\end{table}

\newpage
Additionally, we also report the performance on a larger matrix of dimension $1000 \times 1000$ across three schemes with different noise levels as well as on a matrix of dimension $1500 \times 1500$ under three schemes with $(r, SR) = (10, 0.2)$ and SNR = 20. Due to the time constraint, we can only report the results based on one random trial in Table \ref{tab_S2} and Table \ref{tab_1500}. These results of less noisy data and larger dimensions are consistent with what we report in the main text, showing that TL1 outperforms the competing methods.

\begin{table}[ht]
  \centering
  \caption{Similar to Tables \ref{tab:scheme1}-\ref{tab:scheme3}, relative errors of 1000 $\times$ 1000 matrices for one trial}\label{tab_S2}
\begin{tabular}{ccccccccc}
    \hline
      \multicolumn{6}{c}{Scheme 1 without noise}\\
    \hline
       & (r, SR) & Max-norm & Hybrid & Nuclear & TL1\\
    \hline
      1000 &(5, 0.1)& 0.1135& 8.87 $\times10^{-4}$& $\mathit{8.77 \times 10^{-4}}$&  \bm{$1.51\times10^{-5}$}\\
      &(5, 0.2)& 0.0707& $\mathit{1.50\times10^{-4}}$& 5.46$\times10^{-4}$& \bm{$9.50\times10^{-6}$}\\
      &(10,0.1)& 0.1451& ${\mathit6.62\times10^{-4}}$& 1.50$\times10^{-3}$& \bm{$2.60\times10^{-5}$}\\
      &(10,0.2)& 0.0865& $\mathit{1.54\times10^{-4}}$& 8.29$\times10^{-4}$& \bm{$1.45\times10^{-5}$}\\
    \hline\\
    
    \hline
      \multicolumn{6}{c}{Scheme 1 with SNR=10}\\
    \hline
       & (r, SR) & Max-norm & Hybrid & Nuclear & TL1\\
       \hline
      1000&(5, 0.1)& 0.1318& 0.0759& \textit{0.0708}& \textbf{0.0696}\\
      &(5, 0.2)& 0.0811& 0.0689& \textbf{0.0601}& \textit{0.0644}\\
      &(10,0.1)& 0.1508& 0.1081& \textit{0.1043}& \textbf{0.0905}\\
      &(10,0.2)& 0.0970& 0.0781& \textbf{0.0706}& \textit{0.0754}\\
    \hline\\
    
    \hline
      \multicolumn{6}{c}{Scheme 2 without noise}\\
      \hline
       & (r, SR) & Max-norm & Hybrid & Nuclear & TL1\\
    \hline
      1000&(5, 0.1)& 0.2365& \textit{0.1658}& 0.5863& \textbf{0.0129}\\
      &(5, 0.2)& 0.0978& \textit{0.0771}& 0.6069& \textbf{0.0019}\\
      &(10,0.1)& 0.2316& \textit{0.1599}& 0.7476& \textbf{0.0716}\\
      &(10,0.2)& 0.1166& \textit{0.0854}& 0.6062& \textbf{0.0651}\\
    \hline\\
    
    \hline
    \multicolumn{6}{c}{Scheme 2 with SNR=10}\\
    \hline
       & (r, SR) & Max-norm & Hybrid & Nuclear & TL1\\
    
    \hline
      1000&(5, 0.1)& 0.2369& \textit{0.1663}& 0.7490& \textbf{0.0491}\\
      &(5, 0.2)& 0.0982& \textit{0.0769}& 0.6069& \textbf{0.0694}\\
      &(10,0.1)& 0.2321& \textit{0.1609}& 0.7476& \textbf{0.1091}\\
      &(10,0.2)& 0.1167& \textit{0.0858}& 0.6062& \textbf{0.0791}\\
    \hline\\
    
    \hline
      \multicolumn{6}{c}{Scheme 3 without noise}\\
      \hline
       & (r, SR) & Max-norm & Hybrid & Nuclear & TL1\\
    \hline
      1000&(5, 0.1)& 0.3064& \textit{0.2138}& 0.7490& \textbf{0.0293}\\
      &(5, 0.2)& 0.1040& \textit{0.0768}& 0.6069& \textbf{0.0001}\\
      &(10,0.1)& 0.3433& \textit{0.2570}& 0.7491& \textbf{0.1315}\\
      &(10,0.2)& 0.1162& \textit{0.0890}& 0.6062& \textbf{0.0085}\\
    \hline\\
    
    \hline
    \multicolumn{6}{c}{Scheme 3 with SNR=10}\\
    \hline
       & (r, SR) & Max-norm & Hybrid & Nuclear & TL1\\
    \hline
      1000&(5, 0.1)& 0.3064& \textit{0.2138}& 0.7490& \textbf{0.0358}\\
      &(5, 0.2)& 0.1040& \textit{0.0768}& 0.6069& \textbf{0.0133}\\
      &(10,0.1)& 0.3433& \textit{0.2570}& 0.7491& \textbf{0.1595}\\
      &(10,0.2)& 0.1162& \textit{0.0890}& 0.6062& \textbf{0.0693}\\
    \hline
\end{tabular}
\end{table}

\newpage

Table \ref{tab_d2} shows the bias and variance results for estimators obtained from TL1 regularization and nuclear norm regularization under Scheme 2 with SNR=10 over 100 random trials.

\begin{table}
    \caption{Results, with the running time for a single trial in parentheses, for dimension $1500 \times 1500$ under three schemes with (r, SR) = (10, 0.2) and SNR = 20} \label{tab_1500}
    \centering
    \begin{tabular}{cccccc}
        \toprule
        Scheme & Max-norm & Hybrid & Nuclear & TL1 \\
        \midrule
        1 & 0.0625 (2488.8) & \textit{0.0083} (2458.0) & 0.0774 (800.64) & \textbf{0.0027} (866.07) \\
        2 & 0.1048 (2364.8) & \textbf{0.0431} (2350.4) & 0.5972 (753.65) & \textit{0.0679} (798.64) \\
        3 & 0.0901 (2382.6) & \textbf{0.0385} (2372.3) & 0.5747 (750.75) & \textit{0.0582} (801.10) \\
        \bottomrule
    \end{tabular}
\end{table}

\begin{table}
\centering
\caption{The bias and variance results of the estimators derived from the TL1 and nuclear norm under Scheme 2 with SNR = 10 over 100 random trials}
\label{tab_d2}
\begin{minipage}{0.4\textwidth}
\centering

\begin{tabular}{cccc}
\hline
\multicolumn{4}{c}{TL1} \\
\hline
$m_1 = m_2$ & (r, SR) & $\text{bias}^2$ & variance \\
\hline
300 & (5, 0.1) & 0.1666 & 0.0613 \\
 & (5, 0.2) & 0.0028 & 0.0269 \\
\hline
\end{tabular}
\end{minipage}%
\hspace{1pt}
\begin{minipage}{0.4\textwidth}
\centering

\begin{tabular}{cccc}
\hline
\multicolumn{4}{c}{Nulcear norm} \\
\hline
$m_1 = m_2$ & (r, SR) & $\text{bias}^2$ & variance \\
\hline
300 & (5, 0.1) & 3.0098 & 0.0310 \\
 & (5, 0.2) & 1.9715 & 0.0182 \\
\hline
\end{tabular}
\end{minipage}
\end{table}

\section{Real dataset description}
\label{A7.real-data}
We conduct experiments on two real datasets:
\begin{enumerate}
    \item Coat Shopping Dataset\footnote{\url{https://www.cs.cornell.edu/~schnabts/mnar/}} contains ratings contributed by 290 Turkers for a comprehensive inventory of 300 items. The training set comprises 6960 non-uniform, self-selected ratings, while the test set includes 4640 uniformly selected ratings. A more detailed description is provided in \cite{schnabel2016recommendations}.
    \item Movielens 100K Dataset\footnote{\url{https://www.kaggle.com/datasets/prajitdatta/movielens-100k-dataset}} is collected by the GroupLens Research Project at the University of Minnesota \citep{harper2015movielens}. This dataset contains 100,000 ratings from 943 users across 1682 movies and is organized into ten subsets, comprising five distinct training sets, and their corresponding test sets.
\end{enumerate}

\end{document}